\numberwithin{equation}{section}
\definecolor{blue}{rgb}{0.0, 0.0, 0.0} 
\newtheorem{lemma}{Lemma}[section]
\newtheorem{definition}{Definition}[section]
\newtheorem{theorem}{Theorem}[section]
\newtheorem{proposition}{Proposition}[section]
\newtheorem{remark}{Remark}[section]
\newtheorem{assumption}{Assumption}[section]
\def\R{{\rm I\!R}}
\def\Argmin{\mathop{\rm Arg\,min}}
\def\sgn{{\rm sgn}}
\title{\sf Doubly majorized algorithm for sparsity-inducing optimization problems with regularizer-compatible constraints}
\author{
Tianxiang Liu \thanks{Department of Mathematical and Computing Science, School of Computing, Tokyo Institute of Technology, Tokyo, Japan (\texttt{liu@c.titech.ac.jp}). This author's research is supported in part by ACT-X, Japan Science and Technology Agency (Grant No. JPMJAX210Q). }
\and
Ting Kei Pong \thanks{Department of Applied Mathematics, Hong Kong Polytechnic University, Hong Kong, People's Republic of China (\texttt{tk.pong@polyu.edu.hk}). This author's research is supported in part by Hong Kong Research Grants Council PolyU153003/19p.}
\and
Akiko Takeda \thanks{Department of Creative Informatics, Graduate School of Information Science and Technology, University of Tokyo, Tokyo, Japan,
  and RIKEN, Center for Advanced Intelligence Project, 
  Tokyo, Japan (\texttt{takeda@mist.i.u-tokyo.ac.jp}, \texttt{akiko.takeda@riken.jp}). This author's research is supported in part by
JSPS KAKENHI Grant Number 19H04069.}
}
\begin{document}

\maketitle

\begin{abstract}
  We consider a class of sparsity-inducing optimization problems whose constraint set is regularizer-compatible, in the sense that, the constraint set becomes easy-to-project-onto after a coordinate transformation induced by the sparsity-inducing regularizer. Our model is general enough to cover, as special cases, the ordered LASSO model in \cite{TibSuo16} and its variants with some commonly used nonconvex sparsity-inducing regularizers. The presence of both the sparsity-inducing regularizer and the constraint set poses challenges on the design of efficient algorithms. In this paper, by exploiting absolute-value symmetry and other properties in the sparsity-inducing regularizer, we propose a new algorithm, called the Doubly Majorized Algorithm (DMA), for this class of problems. The DMA makes use of projections onto the constraint set after the coordinate transformation in each iteration, and hence can be performed efficiently. \emph{Without} invoking any commonly used constraint qualification conditions such as those based on horizon subdifferentials, we show that any accumulation point of the sequence generated by DMA is a so-called $\psi_{\rm opt}$-stationary point, a new notion of stationarity we define as inspired by the notion of $L$-stationarity in \cite{BeckEldar13,BeckHallak16}. We also show that any global minimizer of our model has to be a $\psi_{\rm opt}$-stationary point, again without imposing any constraint qualification conditions. Finally, we illustrate numerically the performance of DMA on solving variants of ordered LASSO with nonconvex regularizers.
\end{abstract}

\section{Introduction}

Sparsity structures arise frequently in contemporary applications such as compressed sensing \cite{CanRomTao06,CanTao05,Chartrand07,ChenDonohoSaunders01} and variable selections \cite{HasTibFri08,Tib96}. In these scenarios, typically, one attempts to find a {\em sparse} vector $\hat x$ such that $A\hat x \approx b$, where $A\in \R^{m\times n}$ and $b\in \R^m$ are given. The corresponding optimization problem can be formulated as
\begin{equation}\label{Prob0}
    \min\limits_{x\in \R^n} \ \ \frac12 \|Ax - b\|^2 + \lambda \sum_{i=1}^n\theta(|{\color{blue}x_i}|),
\end{equation}
where $\theta:\R_+\to \R_+$ is a {\em sparsity-inducing} function, and $\lambda > 0$ is a parameter trading off data fidelity and sparsity in $x$. Popular choices of $\theta$ include:
\begin{enumerate}[(i)]
  \item $\theta(t) = t$ --- The corresponding \eqref{Prob0} then becomes the LASSO model considered in \cite{Tib96};
  \item $\theta(t) = t^p$ for some $p \in (0,1)$ --- The corresponding \eqref{Prob0} belongs to the class of bridge regression models considered in \cite{KnightFu00};
  \item $\theta(t) = \log(1+t/\epsilon)$ for some $\epsilon > 0$ --- This choice of $\theta$ was used in \cite{CanWakBoyd08} for enhancing the sparsity-inducing property of the LASSO model.
\end{enumerate}
Notice that the optimization problem \eqref{Prob0} corresponding to $\theta(t) = t$ is convex, while the problems associated with the other two choices of $\theta$ are nonconvex in general.
Efficient algorithms for solving \eqref{Prob0} with the above choices of $\theta$ abound in the literature. Many of them leverage the computation of the so-called proximal mapping of $\gamma \theta(|\cdot|)$, $\gamma > 0$, i.e., for each $s$, compute a minimizer of the function $t\mapsto \frac1{2\gamma}(s - t)^2 + \theta(|t|)$. We refer the readers to \cite{BeckTeboulle09,gist13,ZengLinWangXu14} and references therein for efficient algorithms for \eqref{Prob0} with the above choices of $\theta$.

While model \eqref{Prob0} makes use of the function $\theta$ to induce sparsity in its solution, it does not explicitly take into account of other structures that may be present in the desired solution. Prior information on these other desirable structures can be incorporated by additionally requiring $x$ to lie in a certain closed set modeling the structures. One recent example is the ordered LASSO model in \cite{TibSuo16} that arises when considering regression problems with time lag, where there is a natural ordering in the magnitude of $x_i$. The basic optimization model takes the following form:
\begin{equation}\label{Prob1}
\begin{array}{rl}
  \min\limits_{x\in \R^n}  & \displaystyle\frac12 \|Ax - b\|^2 + \lambda \sum_{i=1}^n|x_i|\\
  {\rm s.t.} & |x_1|\ge |x_2| \ge \cdots \ge |x_n|,
\end{array}
\end{equation}
where $\lambda > 0$, $A\in \R^{m\times n}$ and $b\in \R^m$; this problem is a variant of \eqref{Prob0} with $\theta(t) = t$ and an {\em additional} constraint
\begin{equation}\label{constraint}
|x_1|\ge |x_2| \ge \cdots \ge |x_n|.
\end{equation}
Notice that one may also replace the sparsity-inducing function in \eqref{Prob1} (i.e., $\theta(t) = t$) by other {\em nonconvex} sparsity-inducing functions such as $\theta(t) = t^p$ ($0<p<1$) and $\theta(t) = \log(1+t/\epsilon)$ ($\epsilon > 0$) as described before, which typically have better empirical sparsity-inducing performances; see, for example, \cite{CanWakBoyd08,Chartrand07}.

Unlike \eqref{Prob0} which can be solved efficiently via the proximal gradient algorithm and its variants for many commonly used $\theta$, with the additional constraint \eqref{constraint}, it is not immediately clear how \eqref{Prob1} (and its variants with different $\theta$) can be solved efficiently; this is especially true when some nonconvex sparsity-inducing regularizers such as $\theta(t) = t^p$ ($0<p<1$) are adopted. One way to get around is to {\em approximate} \eqref{Prob1} by the following {\em convex} optimization problem, as suggested in \cite{TibSuo16}:
\begin{equation}\label{Prob11}
\begin{array}{rl}
  \min\limits_{y,z\in \R^n}  & \displaystyle\frac12 \|A(y - z) - b\|^2 + \lambda \sum_{i=1}^n(y_i + z_i)\\
  {\rm s.t.} & y_1\ge y_2 \ge \cdots \ge y_n \ge 0,\\
  & z_1\ge z_2 \ge \cdots \ge z_n \ge 0.
\end{array}
\end{equation}
Note that $y$ and $z$ are confined to be in the so-called isotone cone, whose projections can be computed efficiently via the classical pool-adjacent-violators algorithm (PAVA). Thus, model \eqref{Prob11} can be solved efficiently via the gradient projection algorithm and its variants. However, this approach may be compromised in terms of interpretability of the solution obtained. Furthermore, in the case when a nonconvex sparsity-inducing regularizer is adopted in place of the $\ell_1$ norm in \eqref{Prob1} for inducing sparser solutions, this approximation technique no longer leads to convex models that admit efficient algorithms.

In this paper, we consider a general optimization model that covers \eqref{Prob1} and some of its variants based on nonconvex sparsity-inducing regularizers as special cases, and develop an algorithm for solving this class of problems. Specifically, we consider the following optimization problem:
\begin{equation}\label{Prob}
  \min\limits_{x\in \R^n} \ \ F(x) := f(x) + \lambda\Psi(|x|) + \delta_{\Omega}(|x|),
\end{equation}
where $f:\R^n\to \R_+$ has Lipschitz gradient whose Lipschitz constant is $L_f > 0$, the absolute value is taken componentwise, $\lambda> 0$, $\Psi(y) := \sum_{i=1}^n\psi(y_i)$, $\delta_\Omega$ is the indicator function of the set $\Omega$ (see Section~\ref{sec2} for notation), and the function $\psi$ and the set $\Omega$ satisfy the following assumption:
\begin{assumption}\label{Assump1}
  \begin{enumerate}[{\rm (a)}]
    \item $\psi:\R_+\to \R_+$ satisfies $\psi(0) = 0$ and is continuous and concave. Moreover, $\psi$ is differentiable on $(0,\infty)$ with $\psi' > 0$ and $\lim_{t\to\infty}\psi(t) = \infty$.
    \item $\phi:= \psi^{-1}$ exists on $\R_+$. Moreover, $\phi'_+$ is locally Lipschitz continuous on $\R_+$.\footnote{Here, $\phi'_+(t)$ is the right-hand derivative at $t$ defined as $\lim_{h\downarrow 0}\frac{\phi(t+h) - \phi(t)}{h}$.}
    \item $\Omega\subseteq \R^n_+$ is a nonempty closed set such that a projection onto $\psi(\Omega)$ can be computed efficiently.\footnote{See Section~\ref{sec2} for the definition of $\psi(\Omega)$, the discussion on its closedness and nonemptiness, and the observation that the set of projections from any $x\in \R^n$ onto $\psi(\Omega)$ is nonempty.}
  \end{enumerate}
\end{assumption}
\noindent The above assumption is general enough for \eqref{Prob} to cover some important instances of \eqref{Prob0} and \eqref{Prob1} as special cases. For example, model \eqref{Prob0} with $\theta(t)= t$, $t^p$ ($p \in (0,0.5]$) or $\log(1+t/\epsilon)$ ($\epsilon > 0$) corresponds to \eqref{Prob} with $\Omega = \R^n_+$ and $\psi(t) = \theta(t)$; moreover, one can check that Assumption~\ref{Assump1} is satisfied for these $\psi$ and $\Omega$.\footnote{\color{blue} Note that $\psi(t) = t^p$ with $p \in (0.5,1)$ does not satisfy Assumption~\ref{Assump1} (b).} In addition, one can also cover model \eqref{Prob1} (where $\theta(t)=t$) and its variants with nonconvex regularizer $\theta(t) = t^p$ ($p \in (0,0.5]$) or $\theta(t) = \log(1+t/\epsilon)$ by setting
\[
\Omega = \{x\in \R^n_+:\; x_1\ge x_2\ge \cdots \ge x_n\},
\]
and considering $\psi(t) = \theta(t)$; indeed, recalling that projections onto the above $\Omega$ can be computed efficiently via PAVA and noting that $\psi(\Omega) = \Omega$ in this case, we see that Assumption~\ref{Assump1} is satisfied for these $\psi$ and $\Omega$.

Since \eqref{Prob} under Assumption~\ref{Assump1} is quite general and covers some particular instances of \eqref{Prob1}, it is not immediately clear how it can be solved efficiently. For example, in view of the smoothness of $f$, it might be tempting to apply proximal-gradient-type methods. However, it is unclear whether such methods can be efficient because the proximal mapping of the nonsmooth part $\lambda\Psi(|\cdot|) + \delta_{\Omega}(|\cdot|)$ in \eqref{Prob} is in general difficult to compute. Specifically, under Assumption~\ref{Assump1}, it is not necessarily easy to obtain an element of the following set given $\gamma > 0$ and $y\in \R^n$:
\begin{equation}\label{hahasub}
\Argmin_{|x|\in \Omega}\left\{\frac1{2\gamma}\|x - y\|^2 + \lambda \Psi(|x|)\right\}.
\end{equation}

In this paper, by exploiting the (absolute-value) symmetry in the nonsmooth part $\lambda\Psi(|\cdot|) + \delta_{\Omega}(|\cdot|)$ in \eqref{Prob} and the \emph{invertibility} and smoothness of $\psi$, we propose a new algorithm, which we call the \underline{D}oubly \underline{M}ajorized \underline{A}lgorithm (DMA), for solving \eqref{Prob} under Assumption~\ref{Assump1}. The DMA obviates the use of \eqref{hahasub} and, instead, makes use of projections onto $\psi(\Omega)$ in each iteration. Hence, in view of {\color{blue}Assumption~\ref{Assump1} (c)}, each iteration of DMA can be executed efficiently; moreover, one can show that the sequence $\{x^k\}$ generated satisfies $|x^k|\in \Omega$ for all $k$. To study the limiting behavior of this sequence, we define a new notion of stationarity (called $\psi_{\rm opt}$-stationarity) as inspired by the notion of $L$-stationarity (see \cite[Definition~2.3]{BeckEldar13} and \cite[Definition~5.2]{BeckHallak16}). We show that $\psi_{\rm opt}$-stationarity is a necessary condition for global optimality for \eqref{Prob} under Assumption~\ref{Assump1} {\em without} any additional assumptions, and prove that any accumulation point of the sequence generated by DMA for solving \eqref{Prob} under Assumption~\ref{Assump1} is a $\psi_{\rm opt}$-stationary point. We would like to emphasize that this characterization of accumulation points is obtained {\em without} invoking any commonly used constraint qualifications in the literature for nonsmooth nonconvex problems (such as those involving the normal cones and horizon subdifferentials; see, for example, \eqref{horizon} below); this is an advantage because such constraint qualifications can be difficult to verify in view of the complexity of the nonsmooth part in \eqref{Prob}.

The rest of the paper is organized as follows. We present notation and preliminary materials in Section~\ref{sec2}. The notion of $\psi_{\rm opt}$-stationarity is defined and shown to be necessary for global optimality in Section~\ref{sec3}. In Section~\ref{sec4}, we describe our algorithm and establish its convergence. Finally, numerical experiments on order-constrained compressed sensing problems and block order-constrained sparse time-lagged regression problems are conducted in Section~\ref{sec5} to illustrate the effectiveness of our algorithm for solving them.

\section{Notation and preliminaries}\label{sec2}

In this paper, we use $\R^n$ to denote the $n$-dimensional Euclidean space and $\R^n_+$ (\emph{resp.} $\R_{++}^n$) to denote the nonnegative (\emph{resp.} positive) orthant of $\R^n$. For two vectors $x,\,y\in \R^n$, their standard inner product is denoted by $\langle x,y\rangle$ and their Hadamard (entry-wise) product is denoted by $x\circ y$. For a vector $x\in \R^n$, we use $\|x\|$ to denote its Euclidean norm, i.e., $\|x\| = \sqrt{\langle x,x\rangle}$. We let $|x|$ be the vector whose $i$-th entry equals $|x_i|$, and ${\rm sgn}(x)$ be the vector whose $i$th entry is given by
\[
\big({\rm sgn}(x)\big)_i = \begin{cases}
  1 & {\rm if}\ x_i \ge 0,\\
  -1 & {\rm if}\ x_i < 0.
\end{cases}
\]
For a $y \in \R^n_+$ and the $\psi$ and $\phi$ as in {\color{blue}Assumptions~\ref{Assump1} (a)} and (b), with an abuse of notation, we use $\psi(y)$ and $\phi(y)$ to denote the vectors whose $i$-th entries are $\psi(y_i)$ and $\phi(y_i)$, respectively.
The vector of all ones is denoted by $e$, whose dimension should be clear from the context.

An extended real-valued function $f:\R^n\to (-\infty,\infty]$ is said to be proper if ${\rm dom}\,f := \{x\in \R^n:\; f(x) < \infty\}$ is nonempty. Such a function is said to be closed if it is lower semicontinuous. For a proper closed function $f$, the Frech\'et subdifferential $\widehat \partial f$, the (limiting) subdifferential $\partial f$ and the horizon subdifferential $\partial^\infty f$ of $f$ at an $x\in {\rm dom}\,f$ are defined respectively as
\[
\begin{aligned}
  \widehat \partial f(x) &:= \left\{v\in \R^n:\; \liminf_{u\to x, u\neq x}\frac{f(u) - f(x) - \langle v,u-x\rangle}{\|u - x\|}\ge 0\right\};\\
  \partial f(x) &:= \{v\in \R^n:\; \exists v^k \to v, x^k\overset{f}\to x \ {\rm with}\ v^k\in \widehat\partial f(x^k)\ \mbox{for all }k\};\\
  \partial^\infty f(x) &:= \{v\in \R^n:\; \exists  t_k\downarrow 0, v^k \to v, x^k\overset{f}\to x \ {\rm with}\ v^k/t_k\in \widehat\partial f(x^k)\ \mbox{for all }k\};
\end{aligned}
\]
where $x^k \overset{f}\to x$ means $x^k \to x$ and $f(x^k)\to f(x)$. Moreover, for $x\notin {\rm dom}\,f$, one defines $\widehat\partial f(x) = \partial f(x) = \partial^\infty f(x) = \emptyset$. As we will comment in Section~\ref{sec3} below, these subdifferentials are standard tools for deriving optimality conditions for \eqref{Prob}, and we defer the discussion on subdifferential-based optimality conditions to Section~\ref{sec3.1}.
For a nonempty closed set $\Xi$ in $\R^n$, we let $P_{\Xi}$ denote the set of projections onto it, i.e., for all $x\in \R^n$,
\[
P_{\Xi}(x) := \Argmin_{y\in \Xi}\|x - y\|.
\]
The above set reduces to a singleton if $\Xi$ is in addition convex.
The indicator function of $\Xi$ is denoted by $\delta_{\Xi}$, which equals $0$ if $x\in \Xi$ and equals $\infty$ otherwise. Moreover, the normal cone of $\Xi$ at an $x\in \Xi$ is defined as $N_{\Xi}(x):= \partial \delta_{\Xi}(x)$. For a nonempty closed set $\Theta\subseteq \R^n_+$ and the $\psi$ as in {\color{blue}Assumption~\ref{Assump1} (a)}, we write
\[
\psi(\Theta) := \{\psi(y)\in \R^n:\; y\in \Theta\} \subseteq \R^n_+,
\]
where we abuse the notation $\psi(y)$ as described above, i.e., use $\psi(y)$ to denote the vector whose $i$-th entry is $\psi(y_i)$.
Note that this set is necessarily closed and nonempty under {\color{blue}Assumptions~\ref{Assump1} (a)} and (b) because $\psi:\R_+\to \R_+$ is a continuous bijection on $\R_+$. Hence, $P_{\psi(\Theta)}(x)\neq \emptyset$ for all $x\in \R^n$.

In the remainder of this section, we will present several important auxiliary lemmas. We start with the following version of Taylor's inequality for (finite-valued) continuous convex functions on $\R_+$. Here and throughout, for a function $h:\R_+\to \R$, we let $h'_+$ denote its right-hand derivative, i.e.,
\[
h'_+(t) := \lim_{s\downarrow 0}\frac{h(t+s) - h(t)}{s}\ \ \ \forall t\in \R_+.
\]
\begin{lemma}\label{lem:Taylor0}
 Let $h:\R_+\to \R$ be convex and continuous, and $h$ be differentiable on $(0,\infty)$. Suppose also that $h'_+$ is locally Lipschitz continuous on $\R_+$. Then for any $a >0$, there exists $c > 0$ such that
 \[
 h(t) + h'_+(t)(s-t)\le h(s) \le h(t) + h'_+(t)(s-t) + \frac{c}{2}(s-t)^2\ \  {\color{blue}whenever}\ s,t \in [0,a].
 \]
\end{lemma}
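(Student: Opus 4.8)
The plan is to prove the two inequalities by completely different means: the left one is pure convexity, while the right one is where the Lipschitz hypothesis on $h'_+$ enters. For the left inequality $h(t) + h'_+(t)(s-t) \le h(s)$, I would invoke only that $h'_+(t)$ is a subgradient of the convex function $h$ at $t$. Concretely, the difference quotient $u \mapsto \frac{h(u)-h(t)}{u-t}$ is nondecreasing by convexity and tends to $h'_+(t)$ as $u \downarrow t$; hence $\frac{h(s)-h(t)}{s-t} \ge h'_+(t)$ for $s > t$ and $\frac{h(s)-h(t)}{s-t} \le h'_+(t)$ for $s < t$. Multiplying through by $(s-t)$ (reversing the inequality in the second case, since $s-t<0$) yields $h(s)-h(t) \ge h'_+(t)(s-t)$ in both cases. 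This step uses none of the differentiability or Lipschitz hypotheses and holds for any choice of $c$.

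For the right inequality, I would first fix $c$ to be a Lipschitz modulus of $h'_+$ on the compact interval $[0,a]$; such a constant exists because $h'_+$ is locally Lipschitz on $\R_+$. The crucial tool is the integral representation
\[
h(s) - h(t) = \int_t^s h'_+(u)\,du \quad \text{for all } s,t\in[0,a].
\]
Since $h$ is differentiable on $(0,\infty)$ with $h' = h'_+$ there, and $h'_+$ is continuous (being locally Lipschitz), the fundamental theorem of calculus gives this for $0 < t \le s \le a$; I would then extend it to the endpoint $0$ by letting $t \downarrow 0$, using continuity of $h$ and boundedness of $h'_+$ on $[0,a]$, and finally to all $s,t \in [0,a]$ (including $s<t$) by subtraction.

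With this representation, the second-order remainder becomes
\[
h(s) - h(t) - h'_+(t)(s-t) = \int_t^s \big(h'_+(u) - h'_+(t)\big)\,du,
\]
and I would bound the integrand by $|h'_+(u)-h'_+(t)| \le c\,|u-t|$. Splitting into the cases $s \ge t$ and $s < t$ (rewriting $\int_t^s = -\int_s^t$ in the latter), the elementary estimates $\int_t^s c(u-t)\,du = \frac{c}{2}(s-t)^2$ and $\int_s^t c(t-u)\,du = \frac{c}{2}(t-s)^2$ give the claimed upper bound $\frac{c}{2}(s-t)^2$ in both cases.

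The hard part will be purely a matter of care at the boundary point $u=0$: because $h$ is assumed only right-differentiable there, I cannot invoke differentiability on the closed interval directly and must justify the integral representation at $0$ through the limiting argument indicated above. Once the representation and the Lipschitz estimate on $h'_+$ are in place, the remaining calculations are routine.
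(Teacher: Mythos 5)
Your proof is correct, but it reaches the quadratic upper bound by a different device than the paper. For the left inequality both arguments are the same (the subgradient inequality from monotone difference quotients). For the right inequality, the paper extends $h$ to all of $\R$ by the affine continuation $H(t) = h(0) + h'_+(0)t$ for $t<0$, observes that $H'$ is then locally Lipschitz on all of $\R$, and invokes the standard descent lemma as a black box; the boundary point $t=0$ is thus handled by the extension trick rather than by any limiting argument. You instead prove the descent estimate from scratch: you take $c$ to be a Lipschitz modulus of $h'_+$ on $[0,a]$, establish the integral representation $h(s)-h(t)=\int_t^s h'_+(u)\,du$ (via the fundamental theorem of calculus on $(0,\infty)$ and a limit as $t\downarrow 0$, which is justified since $h$ is continuous at $0$ and $h'_+$ is bounded on $[0,a]$), and bound the remainder $\int_t^s\bigl(h'_+(u)-h'_+(t)\bigr)\,du$ directly by $\frac{c}{2}(s-t)^2$. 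Your route is more self-contained and makes the constant $c$ explicit as the Lipschitz modulus of $h'_+$ on $[0,a]$ (which is what the later Lemma on $g$ implicitly uses), at the cost of carrying out the boundary analysis by hand; the paper's route is shorter but relies on citing the descent lemma and on the reader accepting that the affine extension produces a function with locally Lipschitz derivative on $\R$. Both yield the same conclusion with essentially the same constant.
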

\begin{proof}
  The first inequality is a direct consequence of convexity.
  For the second inequality, define the function $H:\R\to \R$ by
  \[
  H(t) := \begin{cases}
    h(t) & {\rm if }\ t \ge 0,\\
   h(0) + h'_+(0) t & {\rm otherwise}.
  \end{cases}
  \]
  Then by direct computation and the continuity of $h'_+$ on $\R_+$, we have
  \[
  H'(t) := \begin{cases}
    h'(t) & {\rm if }\ t > 0,\\
    h'_+(0) & {\rm otherwise}.
  \end{cases}
  \]
  Since $h'_+$ is locally Lipschitz continuous on $\R_+$, we see from the above formula that $H'$ is locally Lipschitz on $\R$.
  The desired inequality (and the existence of $c > 0$) now follows from the standard descent lemma for functions with locally Lipschitz gradients.
\end{proof}

The next lemma concerns a version of Taylor's inequality for a structured function defined on $\R_+$. We will make use of the explicit formula of $L$ in \eqref{def_L} for our convergence analysis in Section~\ref{sec4}.

\begin{lemma}[A descent lemma]\label{lem:Taylor}
 Let $h:\R_+\to \R$ be convex and continuous, and $h$ be differentiable on $(0,\infty)$. Suppose also that $h'_+$ is locally Lipschitz continuous on $\R_+$. Let $b \in \R$, $\gamma > 0$ and define $g(t) = \frac1{2\gamma}(h(t) - b)^2$. Then for any $a >0$, it holds that
 \[
 g(s) \le g(t) + g'_+(t)(s-t) + \frac{L}{2}(s-t)^2\ \ {\color{blue} whenever}\ s,t \in [0,a],
 \]
 where
 \begin{equation}\label{def_L}
 L = \frac{c}{\gamma}\left(\sup_{t\in [0,a]}|h(t)| + |b|\right) + \frac1{\gamma}\left(\sup_{t\in [0,a]}|h'_+(t)| + \frac{ac}2\right)^2 < \infty,
 \end{equation}
 with $c > 0$ given in Lemma~\ref{lem:Taylor0}.
\end{lemma}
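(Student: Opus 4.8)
The plan is to reduce the whole estimate to the two-sided convex Taylor inequality already established in Lemma~\ref{lem:Taylor0}. Since $g=\frac{1}{2\gamma}(h-b)^2$ is a squared composition and hence \emph{not} convex in general, I cannot simply rerun the argument of Lemma~\ref{lem:Taylor0} for $g$. Instead, I would exploit the product structure of $g$ by writing its Taylor remainder as an exact algebraic combination of the remainder of $h$ and a squared first-order increment of $h$, and then bound each piece by the inequalities for $h$.

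First I would record the one-sided chain rule. For $t>0$ the function $h$ is differentiable, hence so is $g$, with $g'(t)=\frac{1}{\gamma}(h(t)-b)h'(t)$; at $t=0$, since $w\mapsto w^2$ is differentiable and $h$ admits the right derivative $h'_+(0)$, composition gives $g'_+(0)=\frac{1}{\gamma}(h(0)-b)h'_+(0)$. Thus $g'_+(t)=\frac{1}{\gamma}(h(t)-b)h'_+(t)$ throughout $[0,a]$. Writing $v:=h(t)-b$ and $d:=h(s)-h(t)$ and expanding $(h(s)-b)^2-(h(t)-b)^2=2vd+d^2$, I obtain the key identity
\[
g(s)-g(t)-g'_+(t)(s-t)=\frac{1}{\gamma}v\big[h(s)-h(t)-h'_+(t)(s-t)\big]+\frac{1}{2\gamma}\big(h(s)-h(t)\big)^2 .
\]

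Next I would bound the two terms separately on $[0,a]$ via Lemma~\ref{lem:Taylor0}. For the first term, the bracket lies in $\big[0,\tfrac{c}{2}(s-t)^2\big]$ by the two inequalities of Lemma~\ref{lem:Taylor0}; regardless of the sign of $v$ this forces $\frac{1}{\gamma}v[\cdots]\le \frac{c}{2\gamma}|v|(s-t)^2$, and $|v|\le \sup_{[0,a]}|h|+|b|$ yields the first summand of $\frac{L}{2}$. For the second term, the upper inequality of Lemma~\ref{lem:Taylor0} combined with $(s-t)^2\le a\,|s-t|$ on $[0,a]$ gives $|h(s)-h(t)|\le \big(|h'_+(t)|+\tfrac{ac}{2}\big)|s-t|$, so that $\frac{1}{2\gamma}(h(s)-h(t))^2\le \frac{1}{2\gamma}\big(\sup_{[0,a]}|h'_+|+\tfrac{ac}{2}\big)^2(s-t)^2$, the second summand of $\frac{L}{2}$. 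Adding the two bounds reproduces exactly the constant $L$ in \eqref{def_L}; its finiteness is immediate since $h$ and $h'_+$ are continuous, hence bounded, on the compact set $[0,a]$.

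The only genuinely delicate points are the sign bookkeeping in the first term — one must invoke \emph{both} inequalities of Lemma~\ref{lem:Taylor0}, so that the bound $\frac{1}{\gamma}v[\cdots]\le \frac{c}{2\gamma}|v|(s-t)^2$ still holds when $v=h(t)-b<0$ — and the justification of one-sided differentiability of $g$ at the endpoint $t=0$. Everything else is routine manipulation of the already-proven convex Taylor inequality for $h$.
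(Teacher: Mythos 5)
Your proposal is correct and follows essentially the same route as the paper: the identity $g(s)-g(t)-g'_+(t)(s-t)=\frac{1}{\gamma}(h(t)-b)\bigl[h(s)-h(t)-h'_+(t)(s-t)\bigr]+\frac{1}{2\gamma}\bigl(h(s)-h(t)\bigr)^2$ is exactly what the paper obtains by expanding $(h(s)-b)^2$ as the square of a three-term sum, and the two summands are then bounded via Lemma~\ref{lem:Taylor0} in the same way, yielding the same constant $L$. The sign bookkeeping you flag is handled in the paper by first recording the two-sided estimate as the absolute-value bound $|h(s)-h(t)-h'_+(t)(s-t)|\le\frac{c}{2}(s-t)^2$, which is equivalent to your argument.
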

\begin{proof}
  By direct computation, we see that
  \[
  g'_+(t) = \frac1\gamma (h(t) - b) h'_+(t)\ \ \ {\rm whenever \ }t\in \R_+.
  \]
  Now, fix any $a>0$. From Lemma~\ref{lem:Taylor0}, we can find $c>0$ so that
  \begin{equation}\label{boundhaha}
  |h(s) - h(t) - h'_+(t)(s-t)| \le \frac{c}{2}(s-t)^2\ \ {\rm whenever \ }s,t\in [0,a].
  \end{equation}
  Consequently, for any $s,\,t\in [0,a]$, we have
  \begin{equation}\label{2gammags}
  \begin{aligned}
   2\gamma g(s)& = (h(s) - b)^2 = [h(t) - b + h'_+(t)(s-t) + h(s) - h(t) - h'_+(t)(s-t)]^2\\
   & = (h(t)-b)^2 + 2(h(t)-b)h'_+(t)(s-t)\\
   & \ \ \ + 2(h(t)-b)[h(s) - h(t) - h'_+(t)(s-t)]\\
   & \ \ \ + (h'_+(t)(s-t) + h(s) - h(t) - h'_+(t)(s-t))^2.
  \end{aligned}
  \end{equation}
  We now drive upper bounds for the third and fourth terms on the right hand side of \eqref{2gammags}. For the third term, observe from \eqref{boundhaha} that
  \begin{equation}\label{relation1}
  \begin{aligned}
  &2(h(t)-b)[h(s) - h(t) - h'_+(t)(s-t)]\\
  &\le c|h(t) - b|(s-t)^2\le c\left(\sup_{\hat t\in [0,a]}|h(\hat t)| + |b|\right)(s-t)^2.
  \end{aligned}
  \end{equation}
  Next, for the fourth term on the right hand side of \eqref{2gammags}, we can also deduce using \eqref{boundhaha} that
  \begin{equation}\label{relation2}
    \begin{aligned}
      &(h'_+(t)(s-t) + h(s) - h(t) - h'_+(t)(s-t))^2\\
      & \le (|h'_+(t)(s-t)| + |h(s) - h(t) - h'_+(t)(s-t)|)^2\\
      & \le \left(\sup_{\hat t\in [0,a]}|h'_+(\hat t)||s-t| + \frac{c}2(s-t)^2\right)^2 = \left(\sup_{\hat t\in [0,a]}|h'_+(\hat t)| + \frac{c}2|s-t|\right)^2(s-t)^2\\
      & \le \left(\sup_{\hat t\in [0,a]}|h'_+(\hat t)| + \frac{ca}2\right)^2(s-t)^2,
    \end{aligned}
  \end{equation}
  where the last inequality holds because $s,t\in [0,a]$. Combining \eqref{relation1} and \eqref{relation2} with \eqref{2gammags} and invoking the definition of $L$ in \eqref{def_L}, we can now obtain
  \begin{align*}
   2\gamma g(s) & \le (h(t)-b)^2 + 2(h(t)-b)h'_+(t)(s-t) +\gamma L(s-t)^2\\
   & = 2\gamma g(t) + 2\gamma g'_+(t)(s-t) + \gamma L(s-t)^2,
  \end{align*}
  where the equality follows from the definition of $g$ and the formula of $g'_+$. This completes the proof.
\end{proof}

Before ending this section, we present a key lemma concerning properties of the optimal solution of an absolutely symmetrically structured problem.
\begin{lemma}[Minimizers under absolute-value symmetry]\label{lem:key}
Let $\Upsilon\subseteq\R_+^n$ be a nonempty closed set, $\widebar{x}\in\R^n$ and $g: \R_+^n\to\R_+$ be lower semicontinuous. If
\begin{equation}\label{subp_form}
u\in\Argmin_{|x|\in\Upsilon}\Big\{\frac{1}{2}\|x - \widebar{x}\|^2 + g(|x|)\Big\},
\end{equation}
then we have
\begin{equation}\label{key_obs}
|u| \in\Argmin_{w\in\Upsilon}\Big\{\frac{1}{2}\|w - |\widebar{x}|\|^2 + g(w)\Big\}.
\end{equation}
Moreover, if $\Argmin_{|x|\in\Upsilon}\Big\{\frac{1}{2}\|x - \widebar{x}\|^2 + g(|x|)\Big\} = \{u\}$, then we have $\sgn(u_i) = \sgn(\widebar{x}_i)$ whenever $u_i\neq 0$.
\end{lemma}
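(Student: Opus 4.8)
The plan is to exploit the absolute-value symmetry of the objective through one elementary inequality. First I would record the pointwise fact $x_i\widebar{x}_i \le |x_i|\,|\widebar{x}_i|$; summing over $i$ after expanding the squares gives
\[
\tfrac12\|x-\widebar{x}\|^2 - \tfrac12\big\||x|-|\widebar{x}|\big\|^2 = \sum_{i=1}^n\big(|x_i||\widebar{x}_i| - x_i\widebar{x}_i\big) \ge 0 \qquad \text{for every } x\in\R^n.
\]
Since $g$ depends only on $|x|$, this shows that the signed objective in \eqref{subp_form} always dominates the absolute-value objective in \eqref{key_obs} evaluated at the same magnitude vector $|x|$.

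Next, for an arbitrary candidate $w\in\Upsilon$ I would introduce the sign-aligned point $\widetilde x$ with $\widetilde x_i := w_i\,\sgn(\widebar{x}_i)$. Because $w\in\R_+^n$, one has $|\widetilde x| = w\in\Upsilon$, so $\widetilde x$ is feasible for \eqref{subp_form}; moreover a short case check on the sign of $\widebar{x}_i$ gives $(w_i\sgn(\widebar{x}_i)-\widebar{x}_i)^2 = (w_i - |\widebar{x}_i|)^2$ for each $i$, whence $\tfrac12\|\widetilde x-\widebar{x}\|^2 + g(|\widetilde x|) = \tfrac12\|w-|\widebar{x}|\|^2 + g(w)$. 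Chaining the minimality of $u$ tested against $\widetilde x$ with the domination inequality applied to $u$ yields
\[
\tfrac12\big\||u|-|\widebar{x}|\big\|^2 + g(|u|) \le \tfrac12\|u-\widebar{x}\|^2 + g(|u|) \le \tfrac12\|\widetilde x-\widebar{x}\|^2 + g(|\widetilde x|) = \tfrac12\|w-|\widebar{x}|\|^2 + g(w).
\]
As $|u|\in\Upsilon$ and $w\in\Upsilon$ is arbitrary, this is precisely \eqref{key_obs}.

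For the uniqueness statement I would reuse this construction with $w = |u|$, producing $\widehat u$ defined by $\widehat u_i := |u_i|\sgn(\widebar{x}_i)$. Feasibility holds since $|\widehat u| = |u|\in\Upsilon$, and the identity above shows the objective value at $\widehat u$ equals $\tfrac12\||u|-|\widebar{x}|\|^2 + g(|u|)$, which by the domination inequality does not exceed the objective value at $u$. If the minimizer is the singleton $\{u\}$, then $\widehat u$ must coincide with $u$, so $u_i = |u_i|\sgn(\widebar{x}_i)$ for every $i$; dividing by $|u_i|$ whenever $u_i\neq 0$ gives $\sgn(u_i)=\sgn(\widebar{x}_i)$, as claimed.

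The argument is essentially self-contained, so I do not anticipate a serious obstacle; the one point that demands care is the sign bookkeeping. I must verify the squared-difference identity in every sign regime of $\widebar{x}_i$ — including $\widebar{x}_i = 0$, where the convention $\sgn(0)=1$ forces $\widehat u_i = |u_i|\ge 0$ and hence, under uniqueness, $u_i\ge 0$, consistent with the claim — and confirm at each step that the perturbed points remain feasible, both of which rely only on $\Upsilon\subseteq\R_+^n$ and on $g$ seeing only magnitudes.
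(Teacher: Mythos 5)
Your proof is correct and rests on the same two ingredients as the paper's: the domination inequality $\|x-\widebar{x}\|\ge\||x|-|\widebar{x}|\|$ and the sign-aligned lift $w\mapsto \sgn(\widebar{x})\circ w$ (which is the paper's construction of $\widehat{x}^*$ and $\widebar{u}$). Your organization is slightly more streamlined --- you test minimality directly against an arbitrary $w\in\Upsilon$ instead of first proving the equality of infima and the separate sign dichotomy --- but the argument is essentially the paper's.
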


\begin{proof}
We first prove the following relationship.
\begin{equation}\label{eqs_eq}
{\rm val}_1:= \inf_{|x|\in\Upsilon}\Big\{\frac{1}{2}\|x - \widebar{x}\|^2 + g(|x|)\Big\} = \inf_{|x|\in\Upsilon}\Big\{\frac{1}{2}\||x| - |\widebar{x}|\|^2 + g(|x|)\Big\} =:{\rm val}_2.
\end{equation}
Due to the triangle inequality $\||x| - |\widebar{x}|\| \le \|x - \widebar{x}\|$, it suffices to show that ${\rm val}_1 \le {\rm val}_2$.
To this end, let $x^*$ be a solution of the problem on the right-hand side in \eqref{eqs_eq}.\footnote{Note that such a solution exists because $\Upsilon$ is closed and nonempty, and $g$ is nonnegative lower semicontinuous.} Define $\widehat{x}^*\in \R^n$ by
\begin{equation*}
\widehat{x}^*_i =
\begin{cases}
|x^*_i| & {\rm if}\ \widebar{x}_i = 0;\\
|x^*_i|\cdot\sgn(\widebar{x}_i) & {\rm else}.
\end{cases}
\end{equation*}
Then we have $|\widehat{x}^*| = |x^*|\in\Upsilon$. Moreover,
\begin{equation*}
 {\rm val}_1 \le \frac{1}{2}\|\widehat{x}^* - \widebar{x}\|^2 + g(|\widehat{x}^*|)
  = \frac{1}{2}\||x^*| - |\widebar{x}|\|^2 + g(|x^*|) = {\rm val}_2.
\end{equation*}
This proves \eqref{eqs_eq}.

Next, {\color{blue}we prove the following two statements concerning  $u$ satisfying \eqref{subp_form}:}
\begin{itemize}
    \item[(i)] when $\widebar{x}_i > 0$, it holds that $u_i \ge 0$; when $\widebar{x}_i < 0$, it holds that $u_i \le 0$;
    \item[(ii)] when $\widebar{x}_i = 0$, if $u_i\neq 0$, then $u$ is not the unique solution in \eqref{subp_form}.
\end{itemize}
For (i), if there exists some $i$ such that $\widebar{x}_i > 0$ but $u_i < 0$ or $\widebar{x}_i < 0$ but $u_i > 0$, we can pick any such $i$ and define $\widebar{u}\in \R^n$ by
\begin{equation}\label{u_bar}
    \widebar{u}_j =
    \begin{cases}
    u_j & \ {\rm if}\ j \neq i,\\
    -u_i & \ {\rm if}\ j = i.
    \end{cases}
\end{equation}
Then one can see that $|\widebar{u}| = |u|\in\Upsilon$ and
\begin{equation*}
    \frac{1}{2}\|\widebar{u} - \widebar{x}\|^2 + g(|\widebar{u}|) <  \frac{1}{2}\|u - \widebar{x}\|^2 + g(|u|),
\end{equation*}
which contradicts \eqref{subp_form}.

For (ii), if there exists some $i$ such that $\widebar{x}_i = 0$ and $u_i\neq 0$, we can still define $\widebar{u}$ as in \eqref{u_bar}. Since $u_i\neq 0$, we have $\widebar{u} \neq u$. Moreover, $\widebar{u}$ is also a solution in \eqref{subp_form}, thanks to $\widebar{x}_i = 0$. Therefore, $u$ is not the unique solution in \eqref{subp_form}.

Now, we are ready to prove \eqref{key_obs}. We have
\begin{equation*}
\begin{split}
 \frac{1}{2}\||u| - |\widebar{x}|\|^2 + g(|u|) & \overset{\rm (a)}{=} \frac{1}{2}\|u - \widebar{x}\|^2 + g(|u|) = \inf_{|x|\in\Upsilon}\Big\{\frac{1}{2}\|x - \widebar{x}\|^2 + g(|x|)\Big\}\\
 & \overset{\rm (b)}{=} \inf_{|x|\in\Upsilon}\Big\{\frac{1}{2}\||x| - |\widebar{x}|\|^2 + g(|x|)\Big\} \overset{\rm (c)}{=} \inf_{w\in\Upsilon}\Big\{\frac{1}{2}\|w - |\widebar{x}|\|^2 + g(w)\Big\}{\color{blue},}
\end{split}
\end{equation*}
where (a) follows from statement (i), (b) follows from \eqref{eqs_eq} and (c) follows from the fact that $\Upsilon\subseteq\R_+^n$. This together with $|u|\in\Upsilon$ proves \eqref{key_obs}.

Finally, if $u$ is the unique solution in \eqref{subp_form}, we see from statement (ii) that if $\widebar{x}_i = 0$ we will have $u_i = 0$. Consequently, we have $\widebar{x}_i\neq 0$ whenever $u_i\neq 0$. Then statement (i) implies that $\sgn(u_i) = \sgn(\widebar{x}_i)$ whenever $u_i\neq 0$. This completes the proof.
\end{proof}

\begin{remark}\label{rem:converse}
  Let $\Upsilon\subseteq\R_+^n$ be a nonempty closed set, $\widebar{x}\in\R^n$ and $g: \R_+^n\to\R_+$ be lower semicontinuous. Then one can observe from the proof of Lemma~\ref{lem:key} that \eqref{eqs_eq} holds. Using this observation, one can show readily that if
  $\breve w \in \Argmin_{w\in\Upsilon}\Big\{\frac{1}{2}\|w - |\widebar{x}|\|^2 + g(w)\Big\}$, then $\breve x:= {\rm sgn}(\widebar{x})\circ \breve w$ belongs to $\Argmin_{|x|\in\Upsilon}\Big\{\frac{1}{2}\|x - \widebar{x}\|^2 + g(|x|)\Big\}$. We will need this fact when developing our algorithm in Section~\ref{sec4}.
\end{remark}

\section{First-order necessary optimality conditions}\label{sec3}

In this section, we discuss (first-order) necessary optimality conditions for \eqref{Prob} under Assumption~\ref{Assump1}. Specifically, we discuss necessary conditions for a feasible point of \eqref{Prob} to be globally optimal.

\subsection{Necessary optimality conditions based on limiting subdifferential}\label{sec3.1}

One large class of necessary optimality conditions is deduced based on the concept of limiting subdifferential; see \cite{LiSoMa20} for a recent overview. Indeed, in view of \cite[Theorem~10.1]{RockWets98} and \cite[Exercise~8.8(c)]{RockWets98}, we know that if $x^*$ is a global minimizer of \eqref{Prob} under Assumption~\ref{Assump1}, then
\begin{equation}\label{stat}
0 \in \partial F(x^*) =  \nabla f(x^*) + \partial(\lambda \Psi(|\cdot|) + \delta_{\Omega}(|\cdot|))(x^*).
\end{equation}
A point $\bar x$ satisfying \eqref{stat} in place of $x^*$ is called a stationary point of the function $F$ in \eqref{Prob}. Notice that the subdifferential of the nonsmooth part $P(x) := \lambda\Psi(|x|) + \delta_\Omega(|x|)$, however, is in general difficult to characterize. In addition, typical algorithm such as the proximal gradient algorithm, which clusters at such stationary points, needs to compute the proximal mapping of $P$ in each iteration, i.e., to find
\[
\breve x\in \Argmin_{|x|\in \Omega} \frac1{2\gamma}\|x - y\|^2 + \lambda \Psi(|x|)
\]
given $y$ and $\gamma> 0$.
To the best of our knowledge, such an $\breve x$ cannot be found efficiently for general $\Omega$ and $\Psi$ satisfying Assumption~\ref{Assump1}.

{\em Simpler} subdifferential-based necessary optimality conditions can be obtained under suitable {\em constraint qualifications} at $x^*$ involving the {\em horizon} subdifferential, such as
\begin{equation}\label{horizon}
    \partial^\infty\Psi(|\cdot|)(x^*)\cap {\color{blue}\left(-N_{\widehat\Omega}(x^*)\right)} = \{0\},
\end{equation}
where $\widehat\Omega := \{x:\; |x|\in \Omega\}$. Indeed, under \eqref{horizon}, it is possible to deduce using \eqref{stat} and \cite[Corollary~10.9]{RockWets98} that if $x^*$ is a global minimizer of \eqref{Prob} under Assumption~\ref{Assump1}, then
\begin{equation*}
    0 \in \nabla f(x^*) + \lambda \partial \Psi(|\cdot|)(x^*) + N_{\widehat\Omega}(x^*).
\end{equation*}
While the one single set of subdifferential in \eqref{stat} is split into two in the above display, with each subdifferential set considerably easier to characterize, the sum of the two sets is still not easy to characterize. Furthermore, it also appears to be nontrivial to verify \eqref{horizon} at a candidate solution $x^*$ for our particular problem \eqref{Prob}; indeed, it is unclear whether such condition should hold at any global minimizer of our problem~\eqref{Prob}.

In view of the complicated structure of \eqref{Prob} and the aforementioned difficulties, in this paper, we focus on another way of deriving necessary optimality conditions for our problem \eqref{Prob}. This alternative approach does not explicitly involve subdifferentials and is constraint-qualification free. It is based on fixed {\color{blue}points} of set-valued maps.

\subsection{Necessary optimality conditions based on fixed points of set-valued maps}

Suppose that $x^*$ is a global minimizer of \eqref{Prob} under Assumption~\ref{Assump1}. Then, using the Lipschitz continuity of $\nabla f$, we see that for any $x$ satisfying $|x|\in \Omega$,
\[
\begin{aligned}
  F(x^*)&\le F(x) = f(x) + \lambda\Psi(|x|)\\
  &\le f(x^*) + \langle \nabla f(x^*),x-x^*\rangle + \frac{L_f}{2}\|x - x^*\|^2 + \lambda \Psi(|x|).
\end{aligned}
\]
This shows that
\begin{equation*}
    x^* \in \Argmin_{|x|\in \Omega} \left\{\langle \nabla f(x^*),x-x^*\rangle + \frac{L_f}{2}\|x - x^*\|^2 + \lambda \Psi(|x|)\right\}.
\end{equation*}
Motivated by the above argument, the notion of $L$-stationarity (see \cite[Definition~2.3]{BeckEldar13} and \cite[Definition~5.2]{BeckHallak16}) and the proof of \cite[Lemma~5.3]{BeckHallak16}, we say that an $x^*$ is an $L$-stationary point of \eqref{Prob} (under Assumption~\ref{Assump1}) if there exists $\eta > 0$ such that
\begin{equation}\label{Lstationary}
    x^* \in \Argmin_{|x|\in \Omega} \left\{\langle \nabla f(x^*),x-x^*\rangle + \frac{\eta}{2}\|x - x^*\|^2 + \lambda \Psi(|x|)\right\}.
\end{equation}
Note that if we define a set-valued map $S_\eta:\R^n\rightrightarrows \R^n$ by
\[
S_\eta(y) := \Argmin_{|x|\in \Omega} \left\{\langle \nabla f(y),x-y\rangle + \frac{\eta}{2}\|x - y\|^2 + \lambda \Psi(|x|)\right\},
\]
then $x^*$ is an $L$-stationary point of \eqref{Prob} if and only if there exists $\eta > 0$ such that $x^*$ is a fixed point of $S_\eta$, i.e., $x^*\in S_\eta(x^*)$.

In view of \cite[Theorem~10.1]{RockWets98} and \cite[Exercise~8.8(c)]{RockWets98}, we can deduce immediately that if $x^*$ is an $L$-stationary point of \eqref{Prob}, then $x^*$ is a stationary point of $F$, i.e., it satisfies \eqref{stat}. Moreover, one can show that if the proximal gradient algorithm is applied to solving \eqref{Prob}, any accumulation point is an $L$-stationary point; in this sense, we can regard the proximal gradient algorithm as a {\em companion} algorithm for the notion of $L$-stationarity. However, as pointed out in the previous subsection, it is not clear whether the proximal gradient algorithm can be applied efficiently to solve \eqref{Prob}.

In this paper, we further relax the notion of optimality in the fixed point inclusion in \eqref{Lstationary} and define the following notion of $\psi_{\rm opt}$-stationarity for \eqref{Prob} under Assumption~\ref{Assump1}. The name $\psi_{\rm opt}$ suggests that this notion of stationarity involves an optimization problem concerning $\psi$.

\begin{definition}[$\psi_{\rm opt}$-stationarity condition]
\label{def_ws}
Consider \eqref{Prob} and suppose that Assumption~\ref{Assump1} holds. We say that $x^*$ satisfies the $\psi_{\rm opt}$-stationarity condition, if there exist some $\eta_* > 0$ and $\alpha^*\in \{-1,1\}^n$ such that
\begin{equation}\label{x_opt}
      x^* \in \Argmin_{|x|\in \Omega}\left\{\frac{\eta_*}2\|\psi(|x|) - \psi(|x^*|)\|^2 + \sum_{i=1}^n\big[\lambda + \alpha^*_i\cdot\nabla_i f(x^*) \cdot\phi_+'(\psi(|x^*_i|))\big]\cdot \psi(|x_i|)\right\},
  \end{equation}
with $\alpha_i^* = {\rm sgn}(x_i^*)$ whenever $x_i^* \neq 0$, and $\alpha_i^* = -{\rm sgn}(\nabla_i f(x^*))$ if $x_i^*=0$ and $\nabla_i f(x^*)\neq 0$.
\end{definition}
We show in the next theorem that the $\psi_{\rm opt}$-stationarity condition is a necessary condition for global optimality of \eqref{Prob} under Assumption~\ref{Assump1}{\color{blue}; the proof involves two majorization steps, which arise in \eqref{L_prop} and \eqref{part_2} below.} Interestingly, this implication does not require any additional assumptions such as \eqref{horizon}. We will develop a {\em companion algorithm} for this notion of stationarity in Section~\ref{sec4}.

\begin{theorem}[Global optimality implies $\psi_{\rm opt}$-stationarity]\label{thm:opt-to-psiopt}
Consider \eqref{Prob} and suppose that Assumption~\ref{Assump1} holds. Then any global minimizer of \eqref{Prob} satisfies the $\psi_{\rm opt}$-stationarity condition.
\end{theorem}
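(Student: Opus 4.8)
The plan is to certify the $\psi_{\rm opt}$-stationarity of a global minimizer $x^*$ by composing two majorizations and then a localization argument. I would define $\alpha^*\in\{-1,1\}^n$ exactly by the sign rule in Definition~\ref{def_ws} (so $\alpha_i^* = \sgn(x_i^*)$ if $x_i^*\neq0$, and $\alpha_i^* = -\sgn(\nabla_i f(x^*))$ if $x_i^* = 0\neq\nabla_i f(x^*)$, the remaining entries set to $1$); in every case $x_i^* = \alpha_i^*|x_i^*|$, so no appeal to Lemma~\ref{lem:key} is needed for the signs. Writing $v^* := \psi(|x^*|)$ and $c_i := \lambda + \alpha_i^*\nabla_i f(x^*)\phi'_+(v_i^*)$, the key reformulation is that, after the change of variables $w = |x|$ and $v = \psi(w)$ (a homeomorphism of $\Omega$ onto $\psi(\Omega)$ under which $\Psi(w) = \langle e, v\rangle$), and since the objective in \eqref{x_opt} depends on $x$ only through $|x|$, the inclusion \eqref{x_opt} is equivalent to
\[
v^*\in\Argmin_{v\in\psi(\Omega)} Q(v),\qquad Q(v) := \frac{\eta_*}{2}\|v - v^*\|^2 + \langle c, v - v^*\rangle ,
\]
for some $\eta_* > 0$; as $Q(v^*) = 0$ this amounts to proving $Q\ge0$ on $\psi(\Omega)$.

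For the first majorization, I would use global optimality together with the descent lemma for $f$, exactly as in the derivation of \eqref{Lstationary}, to record that for every $x$ with $|x|\in\Omega$,
\begin{equation}\label{L_prop}
\lambda\Psi(|x^*|)\le\langle\nabla f(x^*), x - x^*\rangle + \frac{L_f}{2}\|x - x^*\|^2 + \lambda\Psi(|x|).
\end{equation}
Substituting the feasible points $x = \alpha^*\circ w$ for $w\in\Omega$ (legitimate since $|x| = w$, using $x^* = \alpha^*\circ|x^*|$) and then setting $w = \phi(v)$ for $v\in\psi(\Omega)$ turns \eqref{L_prop} into $R(v)\ge0$ on $\psi(\Omega)$, where
\[
R(v) := \sum_i\alpha_i^*\nabla_i f(x^*)\big(\phi(v_i) - \phi(v_i^*)\big) + \frac{L_f}{2}\|\phi(v) - \phi(v^*)\|^2 + \lambda\langle e, v - v^*\rangle .
\]

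The second majorization compares $Q$ with $R$. Applying Lemma~\ref{lem:Taylor0} to the convex function $\phi$ on an interval $[0,a]$ yields a constant $\bar c > 0$ with
\begin{equation}\label{part_2}
0\le\phi(v_i) - \phi(v_i^*) - \phi'_+(v_i^*)(v_i - v_i^*)\le\frac{\bar c}{2}(v_i - v_i^*)^2\qquad\text{whenever } v_i, v_i^*\in[0,a],
\end{equation}
together with the Lipschitz bound $|\phi(v_i) - \phi(v_i^*)|\le\phi'_+(a)|v_i - v_i^*|$ on $[0,a]$. Using the lower bound in \eqref{part_2} for sign control and $|\alpha_i^*\nabla_i f(x^*)| = |\nabla_i f(x^*)|$, a term-by-term estimate of $Q(v)-R(v)$ then gives, for $v\in[0,a]^n$,
\[
Q(v) - R(v)\ge\Big(\frac{\eta_*}{2} - \frac{L_f\,\phi'_+(a)^2}{2} - \frac{\bar c\,\|\nabla f(x^*)\|_\infty}{2}\Big)\|v - v^*\|^2 ,
\]
so $Q(v)\ge R(v)\ge0$ on $[0,a]^n\cap\psi(\Omega)$ as soon as $\eta_*\ge L_f\,\phi'_+(a)^2 + \bar c\,\|\nabla f(x^*)\|_\infty$.

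The hard part, and the main obstacle, is that $\psi(\Omega)$ may be unbounded whereas \eqref{part_2} only holds on a bounded interval; indeed $Q\ge R$ genuinely fails far from $v^*$, where $\phi$ may grow superlinearly, so one cannot hope for a global comparison. I would close this gap by a coercivity-based localization, being careful to break the apparent circularity between the radius $a$ and the modulus $\eta_*$: \emph{first} fix $a := \|v^*\|_\infty + 1$, which pins down $\bar c$ and $\phi'_+(a)$ and hence a fixed threshold for $\eta_*$. Since $Q$ is coercive and $\psi(\Omega)$ is closed, $\Argmin_{\psi(\Omega)}Q$ is nonempty, and any minimizer $\bar v$ satisfies $Q(\bar v)\le Q(v^*) = 0$, whence $\frac{\eta_*}{2}\|\bar v - v^*\|^2\le\|c\|\,\|\bar v - v^*\|$ and therefore $\|\bar v - v^*\|\le 2\|c\|/\eta_*$. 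Taking $\eta_*$ large enough that additionally $2\|c\|/\eta_*\le1$ forces $\bar v\in[0,a]^n$, where the comparison above gives $Q(\bar v)\ge0$; combined with $Q(\bar v)\le0$ this yields $Q(\bar v) = 0 = Q(v^*)$, i.e.\ $v^*\in\Argmin_{\psi(\Omega)}Q$. Translating back through $v = \psi(|x|)$ recovers \eqref{x_opt}, completing the argument; note that no constraint qualification such as \eqref{horizon} enters anywhere.
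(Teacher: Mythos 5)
Your proof is correct, and although it follows the same high-level architecture as the paper's --- majorize $f$ via its descent lemma, majorize the resulting model via a Taylor bound on $\phi$ that is only valid on a bounded interval, then close the gap with a localization argument forcing every minimizer of the quadratic model into that interval --- the middle portion is executed by a genuinely different and leaner route. The paper passes from \eqref{L_prop} to the proximal subproblem \eqref{start_gbm} with a modulus $\widebar{L}_f$ chosen \emph{strictly} larger than $L_f$ precisely so that the solution is unique, then invokes Lemma~\ref{lem:key} to extract the sign identity $\sgn(x_i^*)=\sgn\big(x_i^*-\nabla_i f(x^*)/\widebar{L}_f\big)$; this is indispensable there because the computation \eqref{g_prim} must evaluate the one-sided derivative of $t\mapsto\frac{\widebar{L}_f}{2}\big(\phi(t)-|x_i^*-\nabla_i f(x^*)/\widebar{L}_f|\big)^2$, and Lemma~\ref{lem:Taylor} is then needed to majorize this squared composite. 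You bypass all of this by testing \eqref{L_prop} only at the representatives $x=\alpha^*\circ\phi(v)$, which realize every $v\in\psi(\Omega)$ and for which the linear term collapses to $\alpha_i^*\nabla_i f(x^*)\big(\phi(v_i)-\phi(v_i^*)\big)$ with the sign pattern of Definition~\ref{def_ws} built in by construction (since $x^*=\alpha^*\circ|x^*|$ entrywise). Consequently you need neither the uniqueness argument, nor Lemma~\ref{lem:key}, nor Lemma~\ref{lem:Taylor} --- only Lemma~\ref{lem:Taylor0} applied to $\phi$ itself, with the extra term $\frac{L_f}{2}\|\phi(v)-\phi(v^*)\|^2$ absorbed by the Lipschitz bound on $\phi$ over $[0,a]$. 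Your localization (any minimizer $\bar v$ of $Q$ over $\psi(\Omega)$ satisfies $\|\bar v-v^*\|\le 2\|c\|/\eta_*\le 1$, after fixing $a=\|v^*\|_\infty+1$ \emph{before} choosing $\eta_*$) is the same device as the paper's containment \eqref{part_3}, and you are right to be careful about the order of quantifiers there. What the paper's detour buys is that Lemma~\ref{lem:key} and Lemma~\ref{lem:Taylor} are reused in Section~\ref{sec4} for the algorithm and its analysis; what your route buys is a shorter, more self-contained proof of this particular theorem.
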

\begin{proof}
Let $x^*$ be a global minimizer of \eqref{Prob}. We then have $|x^*|\in\Omega$ and for any $|x|\in\Omega$,
\begin{equation*}
    f(x^*) + \lambda\Psi(|x^*|) \le f(x) + \lambda\Psi(|x|).
\end{equation*}
Since $f$ has Lipschitz gradient with modulus $L_f$, we have for any $|x|\in\Omega$,
\begin{equation}\label{L_prop}
    \lambda\Psi(|x^*|) \le  f(x) - f(x^*) + \lambda\Psi(|x|) \le  \langle \nabla f(x^*),\, x - x^*\rangle + \frac{L_f}{2}\|x - x^*\|^2 + \lambda\Psi(|x|).
\end{equation}
Fix any $\widebar{L}_f > L_f$. Then \eqref{L_prop} further implies that
\begin{equation}\label{start_gbm}
\begin{split}
x^* & \in \Argmin_{|x|\in\Omega} \Big\{\langle \nabla f(x^*),\, x - x^*\rangle + \frac{\widebar{L}_f}{2}\|x - x^*\|^2 + \lambda\Psi(|x|)\Big\}\\
& = \Argmin_{|x|\in\Omega}\Big\{\frac{\widebar{L}_f}{2}\big\|x - (x^* - \nabla f(x^*)/\widebar{L}_f)\big\|^2 + \lambda\Psi(|x|)\Big\}.
\end{split}
\end{equation}
Now we show that $x^*$ is the unique optimal solution in \eqref{start_gbm}. Suppose to the contrary that there exists another optimal solution $\widebar{x}^*\neq x^*$. We then see from this and $\widebar{L}_f > L_f$ that
\begin{equation*}
\begin{split}
 & \langle \nabla f(x^*),\, \widebar{x}^* - x^*\rangle + \frac{L_f}{2}\|\widebar{x}^* - x^*\|^2 + \lambda\Psi(|\widebar{x}^*|)\\
 & < \langle \nabla f(x^*),\, \widebar{x}^* - x^*\rangle + \frac{\widebar{L}_f}{2}\|\widebar{x}^* - x^*\|^2 + \lambda\Psi(|\widebar{x}^*|)\\
 & \overset{\rm (a)}= \langle \nabla f(x^*),\, x^* - x^*\rangle + \frac{\widebar{L}_f}{2}\|x^* - x^*\|^2 + \lambda\Psi(|x^*|)= \lambda\Psi(|x^*|),
  \end{split}
\end{equation*}
where we used the assumption that both $\bar x^*$ and $x^*$ are optimal solutions in (a). The above display contradicts \eqref{L_prop} and hence $x^*$ is the unique solution in \eqref{start_gbm}.

By Lemma~\ref{lem:key}, we have from \eqref{start_gbm} that
\begin{equation}\label{two_eq}
\begin{split}
\sgn(x_i^*) & = \sgn\left(x_i^* - \nabla_i f(x^*)/\widebar{L}_f\right) \ \ \ {\rm when}
 \ \ x_i^*\neq 0, \\
|x^*|&\in\Argmin_{w\in\Omega}\Big\{\frac{\widebar{L}_f}{2}\big\|w - |x^* - \nabla f(x^*)/\widebar{L}_f|\big\|^2 + \lambda\Psi(w)\Big\}.
\end{split}
\end{equation}
Let $v = \psi(w)$. {\color{blue}Recalling $\phi = \psi^{-1}$, we} then have
\begin{equation}\label{psi_arg}
    \psi(|x^*|)\in\Argmin_{v\in\psi(\Omega)}\Big\{\frac{\widebar{L}_f}{2}\big\|\phi(v)- |x^* - \nabla f(x^*)/\widebar{L}_f|\big\|^2 + \lambda \langle e,v\rangle\Big\}.
\end{equation}
For each $i$, let $g_i(v_i):= \frac{\widebar{L}_f}{2}\big(\phi(v_i) - |x_i^* - \nabla_if(x^*)/\widebar{L}_f|\big)^2$ and
\begin{equation}\label{def_alpha_s}
    \alpha_i^* :=
    \begin{cases}
    -\sgn(\nabla_if(x^*)) & {\rm if}\ x_i^* = 0\ {\rm and}\ \nabla_if(x^*)\neq 0;\\
     \sgn(x_i^*) & {\rm else.}
    \end{cases}
\end{equation}
We then obtain from the local Lipschitz continuity of $\phi_+'$ that
\begin{equation}\label{g_prim}
\begin{split}
    (g_i)_+'(\psi(|x_i^*|))& = \widebar{L}_f(\phi(\psi(|x_i^*|)) - |x_i^* - \nabla_i f(x^*)/\widebar{L}_f|)\cdot\phi_+'(\psi(|x_i^*|))\\
    & = \widebar{L}_f(|x_i^*| - |x_i^* - \nabla_i f(x^*)/\widebar{L}_f|)\cdot\phi_+'(\psi(|x_i^*|))\\
    & \overset{\rm (a)}= \widebar{L}_f\,\alpha_i^*(x_i^* - x_i^* + \nabla_i f(x^*)/\widebar{L}_f)\cdot\phi_+'(\psi(|x_i^*|))\\
    & = \alpha_i^*\cdot\nabla_if(x^*)\cdot\phi_+'(\psi(|x_i^*|)),
\end{split}
\end{equation}
where (a) follows from the definition of $\alpha_i^*$ in \eqref{def_alpha_s} and the first equation in \eqref{two_eq}.

On the other hand, upon rewriting \eqref{psi_arg} as $\psi(|x^*|) \in\Argmin_{v\in\psi(\Omega)}\left\{\sum_{i=1}^ng_i(v_i) + \lambda \langle e,v\rangle\right\}$, we see that for any $v\in\psi(\Omega)$,
\begin{equation}\label{part_1}
\begin{aligned}
    0 &\le \sum_{i=1}^ng_i(v_i) - \sum_{i=1}^ng_i(\psi(|x_i^*|)) + \lambda \langle e,v - \psi(|x^*|)\rangle\\
    &= \sum_{i=1}^n\left[g_i(v_i) - g_i(\psi(|x_i^*|)) + \lambda(v_i - \psi(|x_i^*|))\right].
\end{aligned}
\end{equation}

Now, fix any $R > 0$ and define $B_R:=\{v: \|v - \psi(|x^*|)\|\le R\}$. For all $v\in\psi(\Omega)\cap B_R$, we have $|v_i|\le \|\psi(|x^*|)\| + R$ for all $i$. We then see from Lemma~\ref{lem:Taylor}\footnote{Notice that $\phi:\R_+\to \R_+$ is convex because $\psi$ is concave and monotone. Moreover, the differentiability of $\phi$  on $(0,\,\infty)$ follows from $\psi' >0$ on $(0,\,\infty)$.} that there exists $\eta_R > 0$ such that for all $v\in\psi(\Omega)\cap B_R$ and all $i$,
\begin{equation}\label{part_2}
    g_i(v_i) - g_i(\psi(|x_i^*|)) \le (g_i)_+'(\psi(|x_i^*|))(v_i - \psi(|x_i^*|)) + \frac{\eta_R}{2}(v_i - \psi(|x_i^*|))^2.
\end{equation}
For simplicity of notation, we let $\mu_i:= \lambda + (g_i)_+'(\psi(|x_i^*|))$ and $\mu:= (\mu_1,\ldots,\mu_n)^\top$. Let $\eta_*:=\max\{\eta_R,\, 2\|\mu\|/R\}$ and define
\begin{equation}\label{V_star}
V_{\eta_*}:=\Argmin_{v\in\psi(\Omega)}\Big\{\frac{\eta_*}{2}\|v - \psi(|x^*|)\|^2 + \sum_{i=1}^n\mu_i(v_i - \psi(|x_i^*|)) \Big\}.
\end{equation}
For any $v_{\eta_*}\in V_{\eta_*}$, we have from $\psi(|x^*|)\in\psi(\Omega)$ that
\begin{equation*}
\begin{aligned}
    &\frac{\eta_*}{2}\|v_{\eta_*} - \psi(|x^*|)\|^2 + \langle \mu, v_{\eta_*} - \psi(|x^*|)\rangle\\
    &\le \frac{\eta_*}{2}\|\psi(|x^*|) - \psi(|x^*|)\|^2 + \langle \mu, \psi(|x^*|) - \psi(|x^*|)\rangle = 0.
\end{aligned}
\end{equation*}
This together with the definition of $\eta_*$ and the Cauchy-Schwartz inequality further gives
\begin{equation*}
    \|v_{\eta_*} - \psi(|x^*|)\| \le \frac{2\|\mu\|}{\eta_*} \le R,
\end{equation*}
which implies that $v_{\eta_*}\in B_R$. In view of the arbitrariness of $v_{\eta_*}$, we have shown
\begin{equation}\label{part_3}
    V_{\eta_*}\subseteq B_R.
\end{equation}
Now, we combine \eqref{part_1} with \eqref{part_2}, use $\eta_*\ge\eta_R$ and obtain
\begin{equation*}
\begin{split}
\psi(|x^*|) & \in\Argmin_{v\in\psi(\Omega)\cap B_R}\Big\{\frac{\eta_*}{2}\|v - \psi(|x^*|)\|^2 + \sum_{i=1}^n\big(\lambda + (g_i)_+'(\psi(|x_i^*|))\big)(v_i - \psi(|x_i^*|)) \Big\}\\
& = \Argmin_{v\in\psi(\Omega)\cap B_R}\Big\{\frac{\eta_*}{2}\|v - \psi(|x^*|)\|^2 + \sum_{i=1}^n\mu_i(v_i - \psi(|x_i^*|)) \Big\}\\
& \overset{\rm (a)}{=} \Argmin_{v\in\psi(\Omega)}\Big\{\frac{\eta_*}{2}\|v - \psi(|x^*|)\|^2 + \sum_{i=1}^n\mu_i(v_i - \psi(|x_i^*|)) \Big\}\\
& \overset{\rm (b)}{=} \Argmin_{v\in\psi(\Omega)}\Big\{\frac{\eta_*}{2}\|v - \psi(|x^*|)\|^2 + \sum_{i=1}^n\big(\lambda + \alpha_i^*\cdot\nabla_if(x^*)\cdot\phi_+'(\psi(|x_i^*|))\big)(v_i - \psi(|x_i^*|)) \Big\},
\end{split}
\end{equation*}
where (a) follows from \eqref{V_star} and \eqref{part_3}, and (b) follows from \eqref{g_prim} and the definition of $\mu_i$.

Finally, recall that for any $|x|\in\Omega$, we have $\psi(|x|)\in\psi(\Omega)$. Then we can deduce from this and the above display that
\begin{equation*}
\begin{aligned}
    &\frac{\eta_*}{2}\|\psi(|x|) - \psi(|x^*|)\|^2 + \sum_{i=1}^n\big[\lambda + \alpha_i^*\cdot\nabla_if(x^*)\cdot\phi_+'(\psi(|x_i^*|))\big]\cdot[\psi(|x_i|) - \psi(|x_i^*|)]\\
    & \ge \frac{\eta_*}{2}\|\psi(|x^*|) - \psi(|x^*|)\|^2 + \sum_{i=1}^n\big[\lambda + \alpha_i^*\cdot\nabla_if(x^*)\cdot\phi_+'(\psi(|x_i^*|))\big]\cdot[\psi(|x^*_i|) - \psi(|x_i^*|)] = 0.
\end{aligned}
\end{equation*}
This together with the arbitrariness of $|x|\in\Omega$ and the definition of $\alpha^*$ in \eqref{def_alpha_s} shows that $x^*$ satisfies the $\psi_{\rm opt}$-stationarity condition.
\end{proof}

Next, we show that the $\psi_{\rm opt}$-stationary condition in Definition~\ref{def_ws} implies the standard notion of stationarity in \eqref{stat} when $\Omega = \R_+^n$.
\begin{proposition}[$\psi_{\rm opt}$-stationarity versus stationarity]\label{re_to_st}
Consider \eqref{Prob} and suppose that Assumption~\ref{Assump1} holds. Suppose in addition that $\Omega = \R_+^n$ and let $x^*$ satisfy the corresponding $\psi_{\rm opt}$-stationarity condition. Then the following statements hold.
\begin{enumerate}[{\rm (i)}]
    \item For all $i$ with $x_i^*\neq 0$, we have
 \begin{equation}\label{opt_i}
       0 \in \lambda \partial \psi(|\cdot|)(x_i^*) + \nabla_i f(x^*).
 \end{equation}
 \item If $\psi'_+(0)<\infty$, then we have for all $i$ with $x_i^* = 0$ that
 \begin{equation}\label{opt_i2}
       |\nabla_i f(x^*)|\le \lambda \psi'_+(0).
 \end{equation}
\end{enumerate}
\end{proposition}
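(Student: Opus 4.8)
The plan is to exploit that when $\Omega = \R_+^n$ the constraint $|x|\in\Omega$ in the $\psi_{\rm opt}$-stationarity inclusion \eqref{x_opt} is vacuous and $\psi(\Omega) = \R_+^n$, so the objective in \eqref{x_opt} is fully separable in the coordinates of $\psi(|x|)$. Reparametrizing $v_i := \psi(|x_i|)$, which ranges freely over $\R_+$ as $x_i$ ranges over $\R$ because $\psi$ is a continuous bijection of $\R_+$, and writing $t_i^* := \psi(|x_i^*|)$ together with $\mu_i := \lambda + \alpha_i^*\,\nabla_i f(x^*)\,\phi'_+(t_i^*)$, the inclusion \eqref{x_opt} becomes equivalent to the statement that for each $i$ the scalar $t_i^*$ minimizes $v\mapsto \frac{\eta_*}{2}(v - t_i^*)^2 + \mu_i v$ over $v\in\R_+$ (vary one coordinate of $x$ at a time). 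From this single one-dimensional, smooth, strongly convex optimality condition both (i) and (ii) will follow by reading off the appropriate first-order condition and translating back through $\psi$ and $\phi = \psi^{-1}$.

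For (i), fix $i$ with $x_i^*\neq 0$. Then $t_i^* = \psi(|x_i^*|) > 0$ is an interior point of $\R_+$, so the first-order condition forces the derivative to vanish at $t_i^*$, giving $\mu_i = 0$, i.e. $\lambda + \alpha_i^*\nabla_i f(x^*)\phi'_+(t_i^*) = 0$. I would then convert $\phi'_+$ into $\psi'$ via the inverse-function identity $\phi'_+(\psi(s)) = 1/\psi'(s)$, valid for $s>0$ since $\psi$ is differentiable there with $\psi'>0$ under Assumption~\ref{Assump1}(a); with $\alpha_i^* = \sgn(x_i^*)$ this yields, after multiplying through by $\psi'(|x_i^*|)\sgn(x_i^*)>0$, the relation $\lambda\psi'(|x_i^*|)\sgn(x_i^*) + \nabla_i f(x^*) = 0$. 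Finally, since $t\mapsto\psi(|t|)$ is differentiable at $x_i^*\neq 0$ with derivative $\sgn(x_i^*)\psi'(|x_i^*|)$, we have $\partial\psi(|\cdot|)(x_i^*) = \{\sgn(x_i^*)\psi'(|x_i^*|)\}$, and the displayed relation is exactly \eqref{opt_i}.

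For (ii), fix $i$ with $x_i^* = 0$, so $t_i^* = 0$ is the left endpoint of $\R_+$ and optimality of $v=0$ only requires the right-hand derivative of the scalar objective at $0$ to be nonnegative, i.e. $\mu_i \ge 0$. Here the key computation is $\phi'_+(0) = 1/\psi'_+(0)$, which I would justify using that $\phi'_+$ is continuous by Assumption~\ref{Assump1}(b), that $\psi'$ is nonincreasing on $(0,\infty)$ with $\lim_{s\downarrow 0}\psi'(s)=\psi'_+(0)$, and that $\psi'_+(0)\in(0,\infty)$ under the hypothesis $\psi'_+(0)<\infty$. If $\nabla_i f(x^*)=0$ then \eqref{opt_i2} is immediate; otherwise $x_i^*=0$ and $\nabla_i f(x^*)\neq 0$ force $\alpha_i^* = -\sgn(\nabla_i f(x^*))$ by \eqref{def_alpha_s}, so $\mu_i = \lambda - |\nabla_i f(x^*)|\phi'_+(0) = \lambda - |\nabla_i f(x^*)|/\psi'_+(0) \ge 0$, which rearranges to \eqref{opt_i2}. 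I expect the main technical care to lie precisely in this boundary case, namely in establishing $\phi'_+(0)=1/\psi'_+(0)$ together with its finiteness and positivity, since this is where the hypothesis $\psi'_+(0)<\infty$ is essential and where the inverse-function identity must be handled as a one-sided limit rather than a genuine two-sided derivative.
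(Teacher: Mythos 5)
Your proposal is correct and follows essentially the same route as the paper: pass to the variable $v=\psi(|x|)$ so that \eqref{x_opt} becomes a separable strongly convex problem over $\R_+^n$, read off the coordinatewise first-order conditions (interior for $x_i^*\neq 0$, one-sided/normal-cone for $x_i^*=0$), and translate back via $\phi'_+(\psi(s))=1/\psi'(s)$ and $\phi'_+(0)=1/\psi'_+(0)$. The only cosmetic difference is that the paper phrases the boundary case through $N_{\R_+}(0)=\R_-$ rather than through nonnegativity of the right-hand derivative, which is the same condition.
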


\begin{proof}
By definition, there exist some $\eta_* > 0$ and $\alpha^*\in \{-1,1\}^n$ with $\alpha^*_i = \sgn(x^*_i)$ whenever $x^*_i \neq 0$, and $\alpha_i^* = -{\rm sgn}(\nabla_i f(x^*))$ if $x_i^*=0$ and $\nabla_i f(x^*)\neq 0$, such that
\begin{equation*}
      x^* \in \Argmin_{|x|\in \R^n_+}\left\{\frac{\eta_*}2\|\psi(|x|) - \psi(|x^*|)\|^2 + \sum_{i=1}^n\big[\lambda + \alpha^*_i\cdot\nabla_i f(x^*) \cdot\phi_+'(\psi(|x^*_i|))\big]\cdot \psi(|x_i|)\right\}.
  \end{equation*}
This implies that
\begin{equation*}
      |x^*| \in \Argmin_{w\in \R^n_+}\left\{\frac{\eta_*}2\|\psi(w) - \psi(|x^*|)\|^2 + \sum_{i=1}^n\big[\lambda + \alpha^*_i\cdot\nabla_i f(x^*) \cdot\phi_+'(\psi(|x^*_i|))\big]\cdot \psi(w_i)\right\}.
  \end{equation*}
Since $\psi:\R^n_+\to \R^n_+$ is invertible and $\psi(\R^n_+) = \R^n_+$, we further have
\begin{equation}\label{optimalityvv}
      \psi(|x^*|) \in \Argmin_{v\in \R^n_+}\left\{\frac{\eta_*}2\|v - \psi(|x^*|)\|^2 + \sum_{i=1}^n\big[\lambda + \alpha^*_i\cdot\nabla_i f(x^*) \cdot\phi_+'(\psi(|x^*_i|))\big]\cdot v_i\right\}.
  \end{equation}
Using the first-order optimality conditions for \eqref{optimalityvv}, we have for all $i$ with $x_i^*\neq 0$ (hence $\psi(|x_i^*|)> 0$) that
\begin{equation*}
0 = \lambda + \alpha^*_i\cdot\nabla_i f(x^*) \cdot\phi'(\psi(|x^*_i|)).
\end{equation*}
{\color{blue}Multiplying both sides of the above equality by $\psi'(|x_i^*|){\rm sgn}(x_i^*)$ and recalling $\phi = \psi^{-1}$, we obtain}
\begin{equation*}
\begin{split}
0& = \lambda\psi'(|x_i^*|){\rm sgn}(x_i^*) + \alpha^*_i\cdot\nabla_i f(x^*) \cdot\phi'(\psi(|x^*_i|))\psi'(|x_i^*|){\rm sgn}(x_i^*)\\
& = \lambda\psi'(|x_i^*|){\rm sgn}(x_i^*) + \alpha^*_i\cdot\nabla_i f(x^*) \cdot{\rm sgn}(x_i^*)\\
& \overset{\rm (a)}= \lambda\psi'(|x_i^*|){\rm sgn}(x_i^*) + \nabla_i f(x^*) \in \lambda\partial \psi(|\cdot|)(x^*_i) + \nabla_i f(x^*),
\end{split}
\end{equation*}
where (a) holds because $\alpha_i^* = \sgn(x_i^*)$, and the inclusion holds because $\psi$ is differentiable at $|x_i^*| > 0$.
This proves (i).

Now, suppose in addition that $\psi'_+(0)<\infty$ and let $i$ be such that $x_i^* = 0$. Note that \eqref{opt_i2} holds trivially if $\nabla_i f(x^*) = 0$. On the other hand, when $\nabla_i f(x^*)\neq 0$, we have from the first-order optimality conditions for \eqref{optimalityvv} that
\begin{equation}\label{relation3}
\begin{aligned}
  0 &\in \lambda + \alpha_i^*\nabla_i f(x^*)\phi_+'(0) + N_{\R_+}(0)\\
  & = \lambda -|\nabla_i f(x^*)|(\psi'_+(0))^{-1} + N_{\R_+}(0),
\end{aligned}
\end{equation}
where the equality follows from the definition of $\alpha_i^*$ in Definition~\ref{def_ws} (since $\nabla_i f(x^*)\neq 0$) and the facts that $\psi_+'(0)\in (0,\infty)$, $\psi(0) = 0$ and $\phi = \psi^{-1}$ on $\R_+$. Item (ii) now follows immediately from \eqref{relation3} upon recalling that $N_{\R_+}(0) = \R_-$.
\end{proof}

\begin{remark}[Relationship with existing stationarity]
{\color{blue}We discuss the relationship between $\psi_{\rm opt}$-stationary (Definition~\ref{def_ws}) and some existing concepts of stationarity. }
\begin{enumerate}[{\rm (i)}]

\item When $\Omega = \R_+^n$ and $\psi'_+(0) < \infty$, one can see from \cite[{\color{blue}Lemma~2.2 (ii)}]{YuPong19} that
$\partial \psi(|\cdot|)(0) = [-\psi_+'(0),\,\psi_+'(0)]$. {\color{blue}Thus, in this case, if $x^*$ is $\psi_{\rm opt}$-stationary for \eqref{Prob} under Assumption~\ref{Assump1}, then} Proposition~\ref{re_to_st}~(ii) implies that for all $i$ with $x_i^* = 0$,
\[
0 \in  \lambda\partial \psi(|\cdot|)(0) + \nabla_i f(x^*).
\]
In view of this, Proposition~\ref{re_to_st}~(i) and invoking \cite[Exercise~8.8(c)]{RockWets98}, we see that $x^*$ satisfies the following standard first-order optimality condition:
\[
0 \in \nabla f(x^*) + \lambda\partial\Psi(|\cdot|)(x^*) = \partial(f(\cdot) + \lambda\Psi(|\cdot|))(x^*).
\]
{\color{blue}Moreover, in this case,  we have that $f(\cdot) + \Psi(|\cdot|)$ is a difference-of-convex function and $\Psi(|\cdot|)$ is regular. Consequently,
\begin{equation*}
0 \in \nabla f(x^*) + \lambda\partial\Psi(|\cdot|)(x^*) = \nabla f(x^*) + \lambda\widehat{\partial}\Psi(|\cdot|)(x^*) \subseteq \widehat{\partial}\left(f(\cdot) + \lambda\Psi(|\cdot|)\right)(x^*).
\end{equation*}
This means that $x^*$ is a d-stationary point, in view of the definition on \cite[page 28]{LiSoMa20}.
}
\item  {\color{blue}When $\Omega = \R_+^n$ and $\psi_+'(0) = \infty$,  our model \eqref{Prob}  is a special case of the model in \cite{BC17}, in which a generalized stationary point was defined for constrained problems with a non-Lipschitz objective function and a closed convex constraint.
We show that, in this case, if $x^*$ is $\psi_{\rm opt}$-stationary for \eqref{Prob} under Assumption~\ref{Assump1}, then $x^*$ is a generalized stationary point of \eqref{Prob}.

To this end, we first note from \cite[Definition~2]{BC17} that an $\bar{x}$ is a generalized stationary point of \eqref{Prob} under Assumption~\ref{Assump1} with $\Omega = \R_+^n$ and $\psi_+'(0) = \infty$ if $H^{\circ}(\bar{x}; v; \R^n)\ge 0$ for every $v\in V_{\bar{x}}:=\{w:\; w_i = 0 \mbox{ if } \bar{x}_i = 0\}$, where $H(\cdot) = f(\cdot) + \lambda\Psi(|\cdot|)$ and  $H^{\circ}(\bar{x}; v; \R^n):=\underset{y\to\bar{x}, t\downarrow 0}{\lim\sup} \frac{H(y + tv) - H(y)}{t}$.
Now, notice that for every $v\in V_{\bar{x}}$,
\begin{equation*}
\begin{split}
H^{\circ}(\bar{x}; v; \R^n) & = \underset{y\to\bar{x}, t\downarrow 0}{\lim\sup} \frac{f(y + tv) - f(y) + \lambda\sum_{i=1}^n[\psi(|y_i + tv_i|) - \psi(|y_i|)]}{t}\\
& \overset{\rm (a)}{=} \underset{y\to\bar{x}, t\downarrow 0}{\lim\sup} \frac{f(y + tv) - f(y) + \lambda\underset{i: \bar{x}_i\neq 0}{\sum}[\psi(|y_i + tv_i|) - \psi(|y_i|)]}{t}\\
& \overset{\rm (b)}{=} \langle \nabla f(\bar{x}),\,v\rangle + \lambda\sum_{i:\bar{x}_i\neq 0}\underset{y_i\to\bar{x}_i, t\downarrow 0}{\lim\sup}\frac{\psi(|y_i + tv_i|) - \psi(|y_i|)}{t}\\
& \overset{\rm (c)}{=} \langle \nabla f(\bar{x}),\,v\rangle + \lambda\sum_{i:\bar{x}_i\neq 0}\psi'(|\bar{x}_i|){\rm sgn}(\bar{x}_i) v_i\\
& \overset{\rm (d)}{=} \sum_{i:\bar{x}_i\neq 0}\left(\nabla_if(\bar{x}) + \lambda\psi'(|\bar{x}_i|){\rm sgn}(\bar{x}_i)\right)v_i,
\end{split}
\end{equation*}
where {\rm (a)} follows from the definition of $V_{\bar{x}}$, {\rm (b)} follows from the smoothness of $f$, {\rm (c)} follows from the differentiability of $\psi$ on $(0,\,\infty)$ and {\rm (d)} follows from the fact that $v\in V_{\bar{x}}$.
Thus, $H^{\circ}(\bar{x}; v; \R^n) \ge 0$ for every $v\in V_{\bar{x}}$ means that for each $i$ with $\bar{x}_i\neq 0$,
\begin{equation*}
\left(\nabla_if(\bar{x}) + \lambda\psi'(|\bar{x}_i|){\rm sgn}(\bar{x}_i)\right)v_i \ge 0 \ \ \mbox{for all } v_i\in\R.
\end{equation*}
This is further equivalent to
\begin{equation*}
 0 = \nabla_i f(\bar{x}) + \lambda\psi'(|\bar{x}_i|){\rm sgn}(\bar{x}_i) \in \nabla_if(\bar{x}) + \lambda\partial\psi(|\cdot|)(\bar{x}_i).
\end{equation*}
Consequently, $x^*$ being a generalized stationary point is equivalent to \eqref{opt_i}, which is implied by $x^*$ being $\psi_{\rm opt}$-stationary, thanks to Proposition~\ref{re_to_st}~(i).
}

As a specific example, when $f(x) = \|Ax - b\|^2$ and $\psi(t) = t^p$ with $p \in (0,0.5]$, condition \eqref{opt_i} can be written as
\begin{equation*}
0 = \lambda p|x_i^*|^{p-1}\sgn(x_i^*) + 2\left(A^\top(Ax^* - b)\right)_i, \ \ \forall\,i\ {\color{blue} with}\ x_i^*\neq 0.
\end{equation*}
Let $X^*$ be the diagonal matrix whose $i$th diagonal entry equals $x_i^*$. The above display can be further equivalently written as
\begin{equation*}
0 = \lambda p|x^*|^p + 2X^*A^\top(Ax^* - b),
\end{equation*}
which reduces to the standard first-order optimality condition for the optimization problem $\min_{x\in \R^n}f(x) + \lambda\sum_{i=1}^n|x_i|^p$; see, for example, \cite[Definition~3.1]{CXY10}.

\end{enumerate}

\end{remark}

\section{Algorithm and convergence analysis}\label{sec4}

In this section, we motivate and present our algorithm for solving \eqref{Prob} under Assumption~\ref{Assump1}, and establish subsequential convergence of our proposed algorithm to $\psi_{\rm opt}$-stationary points.

Noting that the objective of \eqref{Prob} consists of a smooth part $f$ (with Lipschitz gradient) and a nonsmooth part $(
\lambda\Psi + \delta_{\Omega})(|\cdot|)$, it is tempting to adapt the proximal gradient algorithm, which is a popular class of algorithm for tackling optimization problems with objectives being the sum of a smooth part and a nonsmooth part. However, suppose we directly apply the proximal gradient algorithm with constant stepsize $\gamma\in (0,\frac1{L_f})$, we will be confronted with the following subproblem in every iteration: given $x^k$, the $x^{k+1}$ is obtained as an $\breve x$ satisfying
\begin{equation}\label{PGsubproblem}
\breve x \in \Argmin_{|x|\in \Omega}\left\{\langle\nabla f(x^k),x-x^k\rangle+\frac1{2\gamma}\|x - x^k\|^2 + \lambda\Psi(|x|)\right\}.
\end{equation}
This subproblem basically requires computing the so-called proximal mapping of the nonsmooth nonconvex function $x\mapsto \gamma\cdot(\lambda\Psi + \delta_{\Omega})(|x|)$, which does not have closed-form solutions in general. Thus, it appears that the proximal gradient algorithm cannot be efficiently applied to solving \eqref{Prob}.

Despite not having closed-form solutions, the subproblem \eqref{PGsubproblem} looks highly structured. Indeed, note that \eqref{PGsubproblem} can be equivalently written as
\begin{equation}\label{subprob_orig}
\breve x \in \Argmin_{|x|\in \Omega}\left\{\frac1{2\gamma}\|x - (x^k - \gamma \nabla f(x^k))\|^2 + \lambda\Psi(|x|)\right\}.
\end{equation}
Based on this reformulation and Remark~\ref{rem:converse}, we see that a solution $\breve x$ of \eqref{PGsubproblem} can be obtained as $\breve x = \breve\alpha\circ \breve w$, where $\breve\alpha = {\rm sgn}(x^k - \gamma \nabla f(x^k))$ and
\begin{equation*}
\breve w\in \Argmin_{w\in \Omega} \left\{\frac1{2\gamma}\|w - |x^k - \gamma \nabla f(x^k)|\|^2 + \lambda\Psi(w)\right\}.
\end{equation*}
The above optimization problem does not seem to be easier to solve compared with \eqref{subprob_orig}, because the projection onto $\Omega$ may not be efficiently executable and the structure of $\psi$ can be complex.\footnote{When $P_{\Omega}$ can be efficiently computed and $\psi(t) =  t$, one can compute $\breve w$ efficiently as an element of $P_{\Omega}(|x^k - \gamma \nabla f(x^k)| - \lambda\gamma e)$. In this case, the proximal gradient algorithm \eqref{PGsubproblem} and its variants can be applied efficiently. See also Remark~\ref{remark:I}.} To further simplify the subproblem we need to solve, we exploit {\color{blue}Assumption~\ref{Assump1} (b)}, which states that $\psi$ has an inverse $\phi$ whose directional derivative is locally Lipschitz, to deduce that a $\breve w$ can be obtained as $\breve w = \phi(\breve v)$, with $\breve v$ given by
\begin{equation}\label{Probv}
\breve v \in \Argmin_{v\in \psi(\Omega)}  \left\{\frac1{2\gamma}\|\phi(v) - |x^k - \gamma \nabla f(x^k)|\|^2 + \lambda \langle e,v\rangle\right\}.
\end{equation}
Such a reparametrization strategy was also used recently in \cite{LiMcKenzieYin21,XiaoBai21} in the special case when $\psi(t) = \sqrt{t}$ (i.e., $\phi(t) = t^2$) for some simplex-constrained problems, and was called Hadamard parametrization in \cite{LiMcKenzieYin21}. In principle, the optimization problem in \eqref{Probv} can be solved approximately by the gradient projection algorithm (despite the fact that the objective is only continuously differentiable in $\R^n_{++}$), because projections onto $\psi(\Omega)$ {\color{blue}are} easy to compute by assumption. Then one can obtain $\breve x$ approximately as ${\rm sgn}(x^k - \gamma \nabla f(x^k)) \circ \phi(\breve v)$.

However, solving the subproblem~\eqref{Probv} to a desired accuracy can be time consuming. Having this in mind, our algorithm, which is presented in Algorithm~\ref{Alg1} below, is essentially based on solving the proximal gradient subproblem \eqref{PGsubproblem} ``roughly" that we apply only {\em one step} of gradient projection to \eqref{Probv}. Since the objective of \eqref{Probv} does not have globally Lipschitz gradient, we incorporate a linesearch scheme in Step 1b) to search for a viable parameter $\widetilde \eta$. We also incorporate a standard non-monotone linesearch scheme \eqref{ls} to look for a viable $\widetilde\gamma$. Observe that in this algorithm, we maintain $v^k = \psi(|x^k|)$ (and hence $|x^k| = \phi(v^k)$) for all $k\ge 0$.

\begin{algorithm}[h]
	\caption{Doubly majorized algorithm (DMA) for \eqref{Prob} under Assumption~\ref{Assump1}}\label{Alg1}
	\begin{algorithmic}
		\STATE
		\begin{description}
		  \item[\bf Step 0.] Take any $x^0$ with $|x^0|\in \Omega$. Let $\gamma_{\max}\ge\gamma_{\min} > 0$ and  $0 < \underline{\eta}<\overline{\eta}<\infty$. Let $c_1 > 0$, $\tau\in(0,\,1)$ and pick an integer $M \ge 0$. Let $v^0 = \psi(|x^0|)$ and set $k = 0$.
		  \item[\bf Step 1.] Pick any $\widetilde \gamma\in[\gamma_{\min},\,\gamma_{\max}]$.
		  \begin{description}
		   \item[\bf 1a)] Pick any $\widetilde\eta\in [\underline{\eta},\overline{\eta}]$. Consider $G_{\widetilde\gamma}(v) := \lambda \langle e,v\rangle + \sum_{i=1}^n g^i_{\widetilde\gamma}(v_i)$ with $g^i_{\widetilde\gamma}(v_i):= \frac1{2\widetilde \gamma}(\phi(v_i) - |x_i^k - \widetilde \gamma \nabla_i f(x^k)|)^2$ for each $i$.
		  \item[\bf 1b)] Compute
\[
\widetilde v \in \Argmin_{v\in \psi(\Omega)}\left\{ \frac{\widetilde\eta}{2}\|v - v^k\|^2 + \sum_{i=1}^n[\lambda +(g^i_{\widetilde\gamma})'_+(v_i^k)]\cdot (v_i - v_i^k)\right\}.
\]
If $G_{\widetilde\gamma}(\widetilde v)\le G_{\widetilde\gamma}(v^k)$, go to \textbf{Step 1c)}; otherwise, update $\widetilde\eta \leftarrow \widetilde\eta/\tau$ and go to  \textbf{Step 1b)}.
		  \item[\bf 1c)] Set $\widetilde u = \sgn(x^k - \widetilde \gamma \nabla f(x^k))\circ \phi(\widetilde v)$. If
		  \begin{equation}\label{ls}
		  F(\widetilde u) \le \max_{[k - M]_+\le i\le k}F(x^i) - \frac{c_1}{2}\|\widetilde u - x^k\|^2,
          \end{equation}
go to  \textbf{Step 2}; otherwise, update $\widetilde \gamma\leftarrow\tau\widetilde \gamma$ and go to  \textbf{Step 1a)}.
		 \end{description}
		
		 \item[\bf Step 2.] Set $\widebar{\eta}_k = \widetilde\eta$, $\gamma_k = \widetilde \gamma$, $v^{k+1} = \widetilde v$ and $x^{k+1} =\widetilde u$. Update $k\leftarrow k + 1$ and go to \textbf{Step 1}.
		\end{description}
	\end{algorithmic}
\end{algorithm}
\begin{remark}\label{remark:I} We have the following observations concerning Algorithm~\ref{Alg1} when $\psi(t) = t$.
\begin{enumerate}[{\rm (i)}]
  \item If $\psi(t) = t$ and the $\widetilde{\eta}$ in {\em Step 1a)} is chosen such that $\widetilde{\eta} \ge \frac{1}{\widetilde{\gamma}}$, then \emph{Step 1b)} will be invoked exactly once per iteration. Indeed, when $\psi(t) = t$, we have $\phi(t) = t$ and hence each $g^i_{\widetilde\gamma}$ is continuously differentiable on $\R$. Thus, the subproblem in \emph{Step 1b)} can be rewritten as
\begin{equation}\label{rew_subp}
\widetilde{v} \in\Argmin_{v\in\Omega}\left\{\frac{\widetilde{\eta}}{2}\|v - v^k\|^2 + \langle\nabla G_{\widetilde{\gamma}}(v^k),\, v - v^k\rangle\right\}.
\end{equation}
Note that $v^k\in\Omega$ and $\nabla G_{\widetilde{\gamma}}$ has Lipschitz modulus $\frac{1}{\widetilde{\gamma}}$. Using these, $\widetilde{\eta} \ge \frac{1}{\widetilde{\gamma}}$ and the fact that $\widetilde{v}$ is a minimizer in \eqref{rew_subp}, we have
\begin{eqnarray*}
G_{\widetilde{\gamma}}(\widetilde{v}) \!\!\!\!\!\!\!\!\!
&& \displaystyle \le G_{\widetilde{\gamma}}(v^k) + \langle\nabla G_{\widetilde{\gamma}}(v^k),\, \widetilde{v} - v^k\rangle + \frac{1}{2\widetilde{\gamma}}\|\widetilde{v} - v^k\|^2 \\
&&  \displaystyle \le G_{\widetilde{\gamma}}(v^k) + \langle\nabla G_{\widetilde{\gamma}}(v^k),\, \widetilde{v} - v^k\rangle + \frac{\widetilde{\eta}}{2}\|\widetilde{v} - v^k\|^2 \\
&&  \displaystyle \le G_{\widetilde{\gamma}}(v^k) + \langle\nabla G_{\widetilde{\gamma}}(v^k),\, v^k - v^k\rangle + \frac{\widetilde{\eta}}{2}\|v^k - v^k\|^2  = G_{\widetilde{\gamma}}(v^k).
\end{eqnarray*}
Hence, {\em Step 1b)} is invoked exactly once because $\widetilde\eta$ does not need to be updated.
\item If $\psi(t) = t$ and the $\widetilde{\eta}$ in {\em Step 1a)} is chosen as $\frac{1}{\widetilde{\gamma}}$ in every iteration, then \emph{Algorithm~\ref{Alg1}} reduces to a proximal gradient algorithm with non-monotone linesearch {\color{blue}(NPG)}. To see this, first observe that the subproblem in {\rm Step 1b)} can be further rewritten from \eqref{rew_subp} to
\begin{equation*}
\widetilde{v}\in P_{\Omega}\Big(v^k - \frac{1}{\widetilde{\eta}}\nabla G_{\widetilde{\gamma}}(v^k)\Big) = P_{\Omega}\Big(v^k - \big(v^k - y^k + \widetilde{\gamma}\lambda e\big)\Big) = P_{\Omega}\big(y^k - \widetilde{\gamma}\lambda e\big),
\end{equation*}
where $y^k := |x^k - \widetilde\gamma\nabla f(x^k)|$.
Using this observation, the definition of $\widetilde u$ in {\rm Step 1c)} and Remark~\ref{rem:converse}, we conclude that $\widetilde{u}$ satisfies
\begin{equation*}
\widetilde{u} \in \Argmin_{|x|\in \Omega}\left\{\frac1{2\widetilde\gamma}\|x - (x^k - \widetilde\gamma \nabla f(x^k))\|^2 + \lambda\sum_{i=1}^n|x_i|\right\}.
\end{equation*}
This together with \eqref{ls} shows that \emph{Algorithm~\ref{Alg1}} reduces to {\color{blue}NPG} in this case.
\end{enumerate}
\end{remark}

We next establish the well-definedness of Algorithm~\ref{Alg1}. Specifically, we will argue that Step 1b) and Step 1c) are invoked finitely many times in each iteration. To this end, consider  \eqref{Prob} and suppose that Assumption~\ref{Assump1} holds.
Fix any $\gamma > 0$ and $\underline{\eta} > 0$. For each fixed $\eta \ge \underline{\eta}$ and $\widehat x\in \R^n$ with $\widehat v:= \psi(|\widehat x|)\in \psi(\Omega)$, define
\begin{equation}\label{defG}
    G_\gamma(v) := \lambda \langle e,v\rangle + \sum_{i=1}^n g_\gamma^i(v_i)
\end{equation}
with $g_\gamma^i(t) := \frac1{2\gamma}(\phi(t) - |\widehat x_i - \gamma \nabla_i f(\widehat x)|)^2$ for each $i$, and let $v_\eta$ be any element such that
\begin{equation}\label{defv}
v_\eta \in \Argmin_{v\in \psi(\Omega)}\left\{\frac{\eta}{2}\|v - \widehat v\|^2 + \sum_{i=1}^n[\lambda +(g_\gamma^i)'_+(\widehat v_i)]\cdot (v_i - \widehat v_i)\right\}.
\end{equation}
Then we have the following result concerning $v_\eta$.
\begin{lemma}\label{Lemmadescent}
  Consider \eqref{Prob} and suppose that Assumption~\ref{Assump1} holds. Fix any $\gamma > 0$, $\underline{\eta} > 0$, $\eta \ge \underline{\eta}$ and $\widehat x\in \R^n$ with $\widehat v:= \psi(|\widehat x|)\in \psi(\Omega)$, and define $G_\gamma$ and $v_\eta$ as in \eqref{defG} and \eqref{defv}, respectively. Then the following statements hold.
  \begin{enumerate}[{\rm (i)}]
      \item It holds that $\|v_\eta - \widehat v\| \le 2n\underline{\eta}^{-1}(\lambda + \max_{i}|(g_\gamma^i)'_+(\widehat v_i)|)$.
      \item Let $L^i_{\widehat v}$ denote the corresponding $L$ obtained by applying Lemma~\ref{lem:Taylor} with $g = g_{\gamma}^i$ and $a = \widehat v_i + 2n\underline{\eta}^{-1}(\lambda + \max_{j}|(g_{\gamma}^j)'_+(\widehat v_j)|)$ for each $i$. Then it holds that
      \[
      G_\gamma(v_\eta) \le G_\gamma(\widehat v) - \frac{\eta - \max_i L^i_{\widehat v}}2\|v_\eta - \widehat v\|^2.
      \]
      \item Suppose that $v_\eta$ satisfies $G_\gamma(v_\eta) \le G_\gamma(\widehat v)$ and let $u_\gamma := {\rm sgn}(\widehat x - \gamma \nabla f(\widehat x))\circ \phi(v_\eta)$. Then it holds that
      \[
      F(u_\gamma) \le F(\widehat x) - \frac12\left(\frac1\gamma - L_f\right)\|u_\gamma - \widehat x\|^2.
      \]
  \end{enumerate}
\end{lemma}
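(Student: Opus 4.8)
The plan is to prove the three statements in the order stated, since (ii) relies on the radius bound from (i), and (iii) uses the descent inequality that (ii) is designed to produce. Throughout, the crucial feature is that $\widehat v\in\psi(\Omega)$ is itself feasible for the subproblem \eqref{defv} defining $v_\eta$, so comparing objective values at $v_\eta$ and $\widehat v$ is the workhorse. For (i), writing $\mu_i := \lambda + (g_\gamma^i)'_+(\widehat v_i)$, the minimality of $v_\eta$ and the fact that the objective in \eqref{defv} vanishes at $\widehat v$ give $\frac{\eta}{2}\|v_\eta - \widehat v\|^2 + \langle \mu, v_\eta - \widehat v\rangle \le 0$. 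Dropping the nonnegative quadratic term and applying Cauchy--Schwarz yields $\|v_\eta - \widehat v\|\le 2\|\mu\|/\eta \le 2\|\mu\|/\underline\eta$, and the coarse bound $\|\mu\|\le\sum_i|\mu_i|\le n\max_i|\mu_i|\le n(\lambda + \max_i|(g_\gamma^i)'_+(\widehat v_i)|)$ delivers exactly the claimed estimate.

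For (ii), I would first use (i) to localize coordinates: $|(v_\eta)_i-\widehat v_i|\le\|v_\eta-\widehat v\|$ together with $(v_\eta)_i,\widehat v_i\ge 0$ (both points lie in $\psi(\Omega)\subseteq\R^n_+$) shows that $(v_\eta)_i$ and $\widehat v_i$ both lie in $[0,a]$ for the $a$ specified. Since $\psi$ is concave and increasing, $\phi=\psi^{-1}$ is convex, continuous, differentiable on $(0,\infty)$ with locally Lipschitz $\phi'_+$, so each $g_\gamma^i(t)=\frac{1}{2\gamma}(\phi(t)-|\widehat x_i-\gamma\nabla_if(\widehat x)|)^2$ is exactly of the form treated in Lemma~\ref{lem:Taylor}. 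Applying that descent lemma coordinatewise with $s=(v_\eta)_i$, $t=\widehat v_i$, summing over $i$, and adding $\lambda\langle e,v_\eta\rangle$ gives $G_\gamma(v_\eta)\le G_\gamma(\widehat v) + \langle\mu, v_\eta-\widehat v\rangle + \frac{\max_i L^i_{\widehat v}}{2}\|v_\eta-\widehat v\|^2$. Finally, the minimality relation from (i), $\langle\mu, v_\eta-\widehat v\rangle\le -\frac{\eta}{2}\|v_\eta-\widehat v\|^2$, substituted into the last display produces the stated inequality.

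For (iii), the idea is to rewrite $G_\gamma$ at the relevant points as pieces of $F$ via absolute-value symmetry. Setting $c:=\widehat x-\gamma\nabla f(\widehat x)$, the construction $u_\gamma=\sgn(c)\circ\phi(v_\eta)$ gives $|u_\gamma|=\phi(v_\eta)\in\Omega$ and, checking the cases $c_i\neq 0$ and $c_i=0$ separately, the coordinatewise identity $((u_\gamma)_i-c_i)^2=(|u_\gamma|_i-|c_i|)^2$. Since $v_\eta=\psi(|u_\gamma|)$, this turns $G_\gamma(v_\eta)$ into $\lambda\Psi(|u_\gamma|)+\frac{1}{2\gamma}\|u_\gamma-c\|^2$, which I expand as $\lambda\Psi(|u_\gamma|)+\frac{1}{2\gamma}\|u_\gamma-\widehat x\|^2+\langle\nabla f(\widehat x),u_\gamma-\widehat x\rangle+\frac{\gamma}{2}\|\nabla f(\widehat x)\|^2$. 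For $G_\gamma(\widehat v)$, since $\phi(\widehat v)=|\widehat x|$, I bound via the reverse triangle inequality $(|\widehat x_i|-|c_i|)^2\le(\widehat x_i-c_i)^2$ to get $G_\gamma(\widehat v)\le\lambda\Psi(|\widehat x|)+\frac{1}{2\gamma}\|\widehat x-c\|^2=\lambda\Psi(|\widehat x|)+\frac{\gamma}{2}\|\nabla f(\widehat x)\|^2$. Feeding both into the hypothesis $G_\gamma(v_\eta)\le G_\gamma(\widehat v)$, the $\frac{\gamma}{2}\|\nabla f(\widehat x)\|^2$ terms cancel, leaving $\lambda\Psi(|u_\gamma|)\le\lambda\Psi(|\widehat x|)-\langle\nabla f(\widehat x),u_\gamma-\widehat x\rangle-\frac{1}{2\gamma}\|u_\gamma-\widehat x\|^2$. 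Adding the standard descent inequality $f(u_\gamma)\le f(\widehat x)+\langle\nabla f(\widehat x),u_\gamma-\widehat x\rangle+\frac{L_f}{2}\|u_\gamma-\widehat x\|^2$, and recalling $|u_\gamma|,|\widehat x|\in\Omega$ so that the indicator contributions to $F$ vanish, gives the conclusion with coefficient exactly $\frac{1}{2}(\frac{1}{\gamma}-L_f)$.

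The hard part will be in (iii): correctly exploiting the sign construction of $u_\gamma$ so that the transition between $\|u_\gamma-c\|^2$ and $\||u_\gamma|-|c|\|^2$ is an \emph{exact} identity, while at $\widehat x$ only the reverse-triangle \emph{inequality} is available. This asymmetry is essential, since both steps must tilt the estimate in the favorable direction for the final inequality to close; getting the direction wrong at either point would break the bound. The remaining bookkeeping—confirming that the $\frac{\gamma}{2}\|\nabla f(\widehat x)\|^2$ terms cancel cleanly and that the curvature coefficient collapses to precisely $\frac{1}{2}(\frac{1}{\gamma}-L_f)$—is routine but must be done with care.
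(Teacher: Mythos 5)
Your proposal is correct and follows essentially the same route as the paper's proof: part (i) compares subproblem objective values at $v_\eta$ and the feasible point $\widehat v$, part (ii) localizes $v_\eta$ via (i) and applies Lemma~\ref{lem:Taylor} coordinatewise before reusing the minimality inequality, and part (iii) uses the sign construction of $u_\gamma$ to identify $G_\gamma(v_\eta)$ with $\lambda\Psi(|u_\gamma|)+\frac{1}{2\gamma}\|u_\gamma-(\widehat x-\gamma\nabla f(\widehat x))\|^2$ and the reverse triangle inequality at $\widehat v$, exactly as in the paper (merely reorganizing the algebra). No gaps.
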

\begin{proof}
  We first prove (i). From the definition of $v_\eta$ in \eqref{defv} and the fact that $\widehat v\in \psi(\Omega)$, we have $\frac{\eta}{2}\|v_\eta - \widehat v\|^2 + \sum_{i=1}^n[\lambda +(g_{\gamma}^i)'_+(\widehat v_i)]\cdot ([v_\eta]_i - \widehat v_i) \le 0$. Rearranging terms, we see further that
  \[
  \begin{aligned}
    \frac{\eta}{2}\|v_\eta - \widehat v\|^2 &\le - \sum_{i=1}^n[\lambda +(g_{\gamma}^i)'_+(\widehat v_i)]\cdot ([v_\eta]_i - \widehat v_i)\\
    & \le \sum_{i=1}^n[\lambda +\max_j|(g_{\gamma}^j)'_+(\widehat v_j)|]\cdot\|v_\eta - \widehat v\|_\infty\\
    & \le n[\lambda +\max_j|(g_{\gamma}^j)'_+(\widehat v_j)|]\cdot\|v_\eta - \widehat v\|.
  \end{aligned}
  \]
  The desired conclusion now follows immediately from the above display and the fact that $\eta\ge \underline{\eta}$.

  We next prove (ii). Let $L^i_{\widehat v}$ denote the corresponding $L$ obtained by applying Lemma~\ref{lem:Taylor} with $g = g_{\gamma}^i$ and $a = \widehat v_i + 2n\underline{\eta}^{-1}(\lambda + \max_{j}|(g_{\gamma}^j)'_+(\widehat v_j)|)$, and define $I_i:= [0,\widehat v_i + 2n\underline{\eta}^{-1}(\lambda + \max_{j}|(g_{\gamma}^j)'_+(\widehat v_j)|)]$ for each $i$. Then we have $[v_\eta]_i \in I_i$ from item (i). Hence, we have in view of \eqref{defG} and Lemma~\ref{lem:Taylor} that,
  \[
  \begin{aligned}
  &G_{\gamma}(v_\eta) = \lambda \langle e,v_{\eta}\rangle + \sum_{i = 1}^n g_{\gamma}^i([v_\eta]_i) = \lambda \langle e, \widehat v\rangle + \lambda \langle e, v_\eta - \widehat v\rangle + \sum_{i = 1}^n g_{\gamma}^i([v_\eta]_i) \\
  & \le \lambda \langle e,\widehat v\rangle + \sum_{i = 1}^n\left[g_{\gamma}^i(\widehat v_i) + [\lambda +(g_{\gamma}^i)'_+(\widehat v_i)]([v_\eta]_i - \widehat v_i) + \frac{L^i_{\widehat v}}2 ([v_\eta]_i - \widehat v_i)^2\right]\\
  & \overset{\rm (a)}\le
  \lambda \langle e, \widehat v\rangle + \sum_{i = 1}^n\left[g_{\gamma}^i(\widehat v_i) + \frac{L^i_{\widehat v} - \eta}2 ([v_\eta]_i - \widehat v_i)^2\right] \le G_{\gamma}(\widehat v) + \frac{\max_i L^i_{\widehat v} - \eta}2 \|v_\eta - \widehat v\|^2,
  \end{aligned}
  \]
  where (a) follows from the definition of $v_{\eta}$ in \eqref{defv} and the fact that $\widehat v\in \psi(\Omega)$, and the last inequality follows from \eqref{defG}. This proves (ii).

  Finally, we prove (iii).
 Using Taylor's inequality and the fact that $f$ has Lipschitz gradient with modulus $L_f$, we have
  \begin{eqnarray*}
    &&F(u_{\gamma})\le f(\widehat x) + \langle \nabla f(\widehat x),\,u_{\gamma}-\widehat x\rangle+ \frac{L_f}2\|u_{\gamma}-\widehat x\|^2 + (\lambda \Psi+\delta_\Omega)(|u_{\gamma}|)\\
    &&= f(\widehat x) + \langle \nabla f(\widehat x),\,u_{\gamma}-\widehat x\rangle+ \frac1{2\gamma}\|u_{\gamma}-\widehat x\|^2 + (\lambda \Psi+\delta_\Omega)(|u_{\gamma}|) - \beta\|u_{\gamma}-\widehat x\|^2,
  \end{eqnarray*}
  where $\beta := \frac12\left(\frac{1}{\gamma}-L_f\right)$. Rearranging terms in the above display, we obtain
  \begin{eqnarray}\label{haha}
    &&F(u_{\gamma}) + \beta\|u_{\gamma}-\widehat x\|^2 \nonumber \\
    &&\le  f(\widehat x) + \langle \nabla f(\widehat x),\,u_{\gamma}-\widehat x\rangle+ \frac1{2\gamma}\|u_{\gamma}-\widehat x\|^2 + (\lambda \Psi+\delta_\Omega)(|u_{\gamma}|)\nonumber\\
    && = f(\widehat x) - \frac{\gamma}2\|\nabla f(\widehat x)\|^2 + \frac1{2\gamma}\|u_{\gamma}-\widehat x + \gamma\nabla f(\widehat x)\|^2 +  (\lambda \Psi+\delta_\Omega)(|u_{\gamma}|)\nonumber\\
    && \overset{\rm (a)}= f(\widehat x) - \frac{\gamma}2\|\nabla f(\widehat x)\|^2 + \frac1{2\gamma}\|\widehat\alpha\circ\phi(v_\eta)-\widehat\alpha\circ|\widehat x - \gamma\nabla f(\widehat x)|\|^2 +  (\lambda \Psi+\delta_\Omega)(\phi(v_\eta))\nonumber\\
    && = f(\widehat x) - \frac{\gamma}2\|\nabla f(\widehat x)\|^2 + \frac1{2\gamma}\|\phi(v_\eta)-|\widehat x - \gamma\nabla f(\widehat  x)|\|^2 + \lambda \sum_{i=1}^n[v_\eta]_i\nonumber\\
    && \overset{\rm (b)}= f(\widehat x) - \frac{\gamma}2\|\nabla f(\widehat x)\|^2 + G_{\gamma}(v_\eta),
  \end{eqnarray}
  where (a) holds because $|u_{\gamma}|= \phi(v_\eta)$ componentwise and we write $\widehat\alpha := {\rm sgn}(\widehat x - \gamma \nabla f(\widehat x))$ for notational simplicity, and (b) follows from the definition of $G_{\gamma}$ in \eqref{defG}. Using the assumption that $G_{\gamma}(v_\eta) \le G_{\gamma}(\widehat v)$, we deduce further from \eqref{haha} that
  \begin{equation*}
    \begin{split}
    &F(u_{\gamma}) + \beta\|u_{\gamma}-\widehat x\|^2 \le f(\widehat x) - \frac{\gamma}2\|\nabla f(\widehat x)\|^2 + G_{\gamma}(\widehat v)\\
    & = f(\widehat x) - \frac{\gamma}2\|\nabla f(\widehat x)\|^2 + \frac1{2\gamma}\|\phi(\widehat v)-|\widehat x - \gamma\nabla f(\widehat x)|\|^2 + \lambda \sum_{i=1}^n\widehat v_i\\
    & \le f(\widehat x) - \frac{\gamma}2\|\nabla f(\widehat x)\|^2 + \frac1{2\gamma}\|\widehat x-(\widehat x - \gamma\nabla f(\widehat x))\|^2 + (\lambda \Psi+\delta_\Omega)(|\widehat x|)\\
    & = F(\widehat x),
  \end{split}
  \end{equation*}
 where the last inequality follows from $\widehat v = \psi(|\widehat x|)$ (thus $\phi(\widehat v) = |\widehat x|$), $|\widehat x|\in\Omega$ and the triangle inequality. This completes the proof.
\end{proof}

\begin{remark}[Well-definedness of Algorithm~\ref{Alg1}]\label{out_well}
We discuss the well-definedness of Algorithm~\ref{Alg1}, i.e., we argue that in each iteration, Step 1b) and Step 1c) are only invoked finitely many times.

  Suppose that an $x^k$ is given for some $k\ge 0$. Observe from the update rule of the algorithm that $v^k = \psi(|x^k|)$. For a given $\widetilde\gamma > 0$, by applying {\color{blue}Lemma~\ref{Lemmadescent} (ii)} with $\widehat x = x^k$ and invoking \eqref{defG} and \eqref{defv}, we conclude that $G_{\widetilde \gamma}(\widetilde v) \le G_{\widetilde \gamma}(v^k)$ for all sufficiently large $\widetilde \eta$. This together with the update rule of $\widetilde \eta$ shows that Step 1b) will only be invoked finitely many times given any $\widetilde\gamma$.

  In addition, for any $\widetilde v$ that satisfies $G_{\widetilde\gamma}(\widetilde v) \le G_{\widetilde\gamma}(v^k)$, according to {\color{blue}Lemma~\ref{Lemmadescent} (iii)}, the corresponding $\widetilde u$ will satisfy \eqref{ls} whenever $\widetilde \gamma \le \frac{1}{c_1 + L_f}$. In view of the update rule of $\widetilde \gamma$, we can also conclude that Step 1c) is invoked only finitely many times at the $k$th iteration. This also implies that Step 1b) will only be repeated for finitely many different $\widetilde \gamma$.
These observations together with an induction argument prove the well-definedness of Algorithm~\ref{Alg1}.

Finally, notice that at iteration $k$, the initial $\widetilde\gamma$ at the beginning of Step 1 lies in $[\gamma_{\min},\gamma_{\max}]$. Hence, we conclude based on this and the update rule of $\widetilde\gamma$ that
\[
\gamma_{\max}\ge \gamma_k \ge \min\left\{\gamma_{\min},\frac{\tau}{c_1 + L_f}\right\} =: \widetilde\gamma_{\min}.
\]
\end{remark}

We now show that any accumulation point of the $\{x^k\}$ generated by Algorithm~\ref{Alg1} is a $\psi_{\rm opt}$-stationary point. In this regard, we can say that Algorithm~\ref{Alg1} is a {\em companion} algorithm for the notion of $\psi_{\rm opt}$ stationarity. This companion relationship is not too unexpected upon noting the similarity between the derivations that led to Algorithm~\ref{Alg1} and the proof of Theorem~\ref{thm:opt-to-psiopt} (which establishes the necessity of $\psi_{\rm opt}$-stationarity for global optimality).

\begin{theorem}[Subsequential convergence]\label{thm2}
  Consider \eqref{Prob} and suppose that Assumption~\ref{Assump1} holds. Let $\{x^k\}$ and $\{\widebar{\eta}_k\}$ be generated by Algorithm~\ref{Alg1}. Then the following statements hold.
  \begin{enumerate}[{\rm (i)}]
      \item It holds that $\lim_{k\to\infty}\|x^{k+1} - x^k\| = 0$.
      \item The sequences $\{x^k\}$ and $\{\widebar{\eta}_k\}$ are bounded.
      \item Any accumulation point $x^*$ of $\{x^k\}$ satisfies the $\psi_{\rm opt}$-stationarity condition.
  \end{enumerate}
\end{theorem}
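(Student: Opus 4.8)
The plan is to handle the three claims in a logically convenient order, establishing the (easy) boundedness of $\{x^k\}$ first, then the vanishing step (i), then the boundedness of $\{\widebar{\eta}_k\}$, and finally the stationarity in (iii). First I would note that the non-monotone test \eqref{ls} makes $\{\max_{[k-M]_+\le i\le k}F(x^i)\}$ non-increasing: writing $\ell(k)\in\Argmax_{[k-M]_+\le i\le k}F(x^i)$, \eqref{ls} gives $F(x^{k+1})\le F(x^{\ell(k)})$, and the remaining indices in the window of $k+1$ lie in the window of $k$, so $F(x^{\ell(k+1)})\le F(x^{\ell(k)})$. Since $F\ge 0$, this sequence converges to some $\widebar{F}\ge 0$; in particular $F(x^k)\le F(x^{\ell(k)})\le F(x^0)$. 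As $F(x^k)=f(x^k)+\lambda\Psi(|x^k|)\ge\lambda\sum_i\psi(|x^k_i|)$ with $\psi$ increasing and $\lim_{t\to\infty}\psi(t)=\infty$ by Assumption~\ref{Assump1}(a), the sublevel sets of $\Psi(|\cdot|)$ are bounded, so $\{x^k\}$ is bounded. For (i) I would run the standard Grippo--Lampariello--Lucidi induction: using \eqref{ls} and the uniform continuity of $x\mapsto f(x)+\lambda\Psi(|x|)$ on the compact set containing $\{x^k\}$ (noting $F(x^k)=f(x^k)+\lambda\Psi(|x^k|)$ since $|x^k|\in\Omega$), one shows by induction on $j$ that $\|x^{\ell(k)-j}-x^{\ell(k)-j-1}\|\to 0$ and $F(x^{\ell(k)-j})\to\widebar{F}$ for $j=1,\dots,M+1$, whence $\|x^{k+1}-x^k\|\to 0$.

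For the remaining half of (ii) I would bound the accepted $\widebar{\eta}_k$ uniformly. By Lemma~\ref{Lemmadescent}(ii), the acceptance test $G_{\widetilde\gamma}(\widetilde v)\le G_{\widetilde\gamma}(v^k)$ holds once $\widetilde\eta\ge\max_i L^i_{v^k}$, so it suffices to bound $\max_i L^i_{v^k}$ over $k$. The data entering the formula \eqref{def_L} for $L^i_{v^k}$---namely $\gamma=\gamma_k\in[\widetilde\gamma_{\min},\gamma_{\max}]$ (Remark~\ref{out_well}), $b=|x^k_i-\gamma_k\nabla_i f(x^k)|$, the endpoint $a$, and the local Lipschitz constant $c$ of $\phi'_+$ on $[0,a]$---are all controlled because $\{x^k\}$, hence $\{v^k\}=\{\psi(|x^k|)\}$ and $\{\nabla f(x^k)\}$, are bounded, and $\phi=\psi^{-1}$ is smooth by Assumption~\ref{Assump1}(b). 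Thus $\max_i L^i_{v^k}\le\widebar{L}$ for a constant $\widebar{L}$ independent of $k$, and since $\widetilde\eta$ starts in $[\underline{\eta},\overline{\eta}]$ and is multiplied by $1/\tau$ only upon failure, the accepted value obeys $\underline{\eta}\le\widebar{\eta}_k\le\max\{\overline{\eta},\widebar{L}/\tau\}$.

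For (iii), let $x^{k_j}\to x^*$. By (i), $x^{k_j+1}\to x^*$, so by continuity of $\psi$ both $v^{k_j}$ and $v^{k_j+1}=\widetilde v$ tend to $v^*:=\psi(|x^*|)$. Using (ii), $\gamma_{k_j}\in[\widetilde\gamma_{\min},\gamma_{\max}]$ and $\widebar{\eta}_{k_j}\in[\underline{\eta},\max\{\overline{\eta},\widebar{L}/\tau\}]$, and since $\{-1,1\}^n$ is finite, I would pass to a further subsequence along which $\gamma_{k_j}\to\gamma_*>0$, $\widebar{\eta}_{k_j}\to\eta_*\ge\underline{\eta}>0$, and $\alpha^{k_j}:=\sgn(x^{k_j}-\gamma_{k_j}\nabla f(x^{k_j}))\equiv\alpha^*$. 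Because $x^{k+1}=\sgn(x^k-\gamma_k\nabla f(x^k))\circ\phi(v^{k+1})$ with $\phi(v^{k+1})\ge 0$, we have $\sgn(x^{k_j+1}_i)=\alpha^{k_j}_i$ whenever $x^{k_j+1}_i\neq 0$; together with $x^{k_j+1}\to x^*$ this yields $\alpha^*_i=\sgn(x^*_i)$ when $x^*_i\neq 0$, while $x^{k_j}_i-\gamma_{k_j}\nabla_i f(x^{k_j})\to-\gamma_*\nabla_i f(x^*)$ gives $\alpha^*_i=-\sgn(\nabla_i f(x^*))$ when $x^*_i=0$ and $\nabla_i f(x^*)\neq 0$, i.e.\ exactly the sign requirements of Definition~\ref{def_ws}. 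Writing $(g^i_{\gamma_{k_j}})'_+(v^{k_j}_i)=\frac{1}{\gamma_{k_j}}(|x^{k_j}_i|-\alpha^*_i(x^{k_j}_i-\gamma_{k_j}\nabla_i f(x^{k_j})))\phi'_+(v^{k_j}_i)$ and passing to the limit using $|x^*_i|=\alpha^*_i x^*_i$ (exactly as in \eqref{g_prim}), the coefficient $\mu^{k_j}_i:=\lambda+(g^i_{\gamma_{k_j}})'_+(v^{k_j}_i)$ converges to $\mu^*_i:=\lambda+\alpha^*_i\nabla_i f(x^*)\phi'_+(\psi(|x^*_i|))$. Finally, in the optimality inequality defining $v^{k_j+1}$ in Step~1b), namely $\frac{\widebar{\eta}_{k_j}}{2}\|v^{k_j+1}-v^{k_j}\|^2+\langle\mu^{k_j},v^{k_j+1}-v^{k_j}\rangle\le\frac{\widebar{\eta}_{k_j}}{2}\|v-v^{k_j}\|^2+\langle\mu^{k_j},v-v^{k_j}\rangle$ for all $v\in\psi(\Omega)$, I would let $j\to\infty$ to obtain $0\le\frac{\eta_*}{2}\|v-v^*\|^2+\langle\mu^*,v-v^*\rangle$ for all $v\in\psi(\Omega)$; hence $v^*\in\Argmin_{v\in\psi(\Omega)}\{\frac{\eta_*}{2}\|v-v^*\|^2+\langle\mu^*,v-v^*\rangle\}$. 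Specializing to $v=\psi(|x|)$ for $|x|\in\Omega$ and recalling $v^*=\psi(|x^*|)$ recovers \eqref{x_opt}, so $x^*$ is $\psi_{\rm opt}$-stationary.

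The hard part will be (iii), specifically the simultaneous identification and limiting behaviour of the sign vector $\alpha^*$ and the coefficient $(g^i_{\gamma_k})'_+(v^k_i)$: one must combine the update rule $\sgn(x^{k+1})=\alpha^k$, the continuity of $\phi'_+$, and the identity $|x^*_i|=\alpha^*_i x^*_i$ (mirroring \eqref{g_prim}) so that the limiting coefficient matches Definition~\ref{def_ws} \emph{exactly}, including at indices with $x^*_i=0$. A secondary technical point is the uniform bound $\max_i L^i_{v^k}\le\widebar{L}$ underlying the boundedness of $\{\widebar{\eta}_k\}$.
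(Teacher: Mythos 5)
Your proposal is correct and follows essentially the same route as the paper's proof: boundedness of $\{x^k\}$ via the non-monotone linesearch and level-boundedness of $F$, item (i) via the standard GLL-type argument (the paper cites \cite[Lemma~4]{WNF09}), boundedness of $\{\widebar{\eta}_k\}$ via a uniform bound on $\max_i L^i_{v^k}$ combined with the backtracking rule, and item (iii) by passing to the limit in the optimality inequality for $v^{k_j+1}$ after identifying $\lim_j (g^i_{\gamma_{k_j}})'_+(v^{k_j}_i)=\alpha^*_i\nabla_i f(x^*)\phi'_+(\psi(|x^*_i|))$ and the sign conditions on $\alpha^*$. The only cosmetic difference is that you compute this limit using the identity $|x^*_i|=\alpha^*_i x^*_i$ at the limit point, whereas the paper inserts $|x_i^{k_j+1}|=\alpha^{k_j}_i x_i^{k_j+1}$ along the sequence and invokes item (i); these are interchangeable.
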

\begin{proof}
First, we see from the criterion \eqref{ls} that for all $k$,
\begin{equation*}
F(x^{k}) \le F(x^0) < \infty.
\end{equation*}
Notice that $F$ is level-bounded because $f$ and $\psi$ are nonnegative functions, $\lambda > 0$, and $\psi$ is level-bounded according to Assumption~\ref{Assump1}. Consequently, the sequence $\{x^k\}$ is bounded. Moreover, the conclusion $\lim_{k\to\infty}\|x^{k+1} - x^k\| = 0$ can be proved similarly as in \cite[Lemma~4]{WNF09}.

We next prove the boundedness of $\{\widebar{\eta}_k\}$. We start by deriving an auxiliary bound on a particular choice of $L^i_{\hat v}$ that satisfies the assumption in {\color{blue}Lemma~\ref{Lemmadescent} (ii)}. To this end, let $M := \sup_{k}\|x^k\|$: this quantity is finite because $\{x^k\}$ is bounded. Fix any $i\in \{1,\ldots,n\}$ and any $k\ge 0$. Then
\begin{equation}\label{bounda}
  \begin{aligned}
  a^k_i&:= v^k_i + 2n\underline{\eta}^{-1}(\lambda + \max_{j}|(g_{\gamma_k}^j)'_+(v^k_j)|) \\
  & \overset{\rm (a)}\le \|v^k\| + 2n \underline{\eta}^{-1} (\lambda + \gamma_k^{-1}\max_j \left|\phi(v_j^k) - |x^k_j - \gamma_k \nabla_j f(x^k)|\right|\cdot|\phi'_+(v_j^k)|)\\
  & \overset{\rm (b)}= \|\psi(|x^k|)\| + 2n \underline{\eta}^{-1} (\lambda + \gamma_k^{-1}\max_j \left||x^k_j| - |x^k_j - \gamma_k \nabla_j f(x^k)|\right|\cdot|\phi'_+(\psi(|x^k_j|))|)\\
  & \le \|\psi(|x^k|)\| + 2n \underline{\eta}^{-1} (\lambda + \max_j |\nabla_j f(x^k)|\cdot|\phi'_+(\psi(|x^k_j|))|)\\
   & \overset{\rm (c)}\le \sup_{\|x\|\le M}\left\{\|\psi(|x|)\| + 2n \underline{\eta}^{-1} (\lambda + \max_j |\nabla_j f(x)|\cdot|\phi'_+(\psi(|x_j|))|)\right\} =: M_1 < \infty,
  \end{aligned}
\end{equation}
  where (a) follows from the definition of $g^i_{\widetilde\gamma}$ in Step 1a) of the algorithm, (b) follows from the definition of $v^k$, (c) holds because $\|x^k\|\le M$, and the finiteness of $M_1$ follows from the continuity of $\psi$, $\phi'_+$ and $\nabla f$. Next, {\color{blue}writing} $y^k:= |x^k - \gamma_k\nabla f(x^k)|$ for notational simplicity, and we apply Lemma~\ref{lem:Taylor0} with $h = \phi$ on $[0,M_1]$ to obtain a $c$ and use it to construct an $L$ as in \eqref{def_L} with $g = g_{\gamma_k}^i$ on $[0,v^k_i + 2n\underline{\eta}^{-1}(\lambda + \max_{j}|(g_{\gamma_k}^j)'_+(v^k_j)|)]$. Denote this $L$ by $L^i_{v^k}$, and observe that this $L^i_{v^k}$ satisfies the assumption in {\color{blue}Lemma~\ref{Lemmadescent} (ii)} with $\gamma = \gamma_k$ and $\widehat v_i = v^k_i$. Moreover, it holds that
  \[
  \begin{aligned}
   & L^i_{v^k} =\frac{c}{\gamma_k}\left(\sup_{t\in [0,a_i^k]}|\phi(t)| + |y^k_i|\right) + \frac1{\gamma_k}\left(\sup_{t\in [0,a_i^k]}|\phi'_+(t)| + \frac{a_i^kc}2\right)^2\\
   &\le \frac{c}{\widetilde\gamma_{\min}}
   \left(\sup_{t\in [0,M_1]}|\phi(t)| + M + \gamma_{\max}\sup_{\|x\|\le M}\|\nabla f(x)\|\right) + \frac1{\widetilde\gamma_{\min}}\left(\sup_{t\in [0,M_1]}|\phi'_+(t)| + \frac{M_1c}2\right)^2 =: M_2 < \infty,
  \end{aligned}
  \]
  where $c$ depends only on $M_1$ and the convex function $\phi$ (since it is obtained by applying Lemma~\ref{lem:Taylor0} with $h = \phi$ on $[0,M_1]$) and is independent of $k$, and the inequality follows from \eqref{bounda} and the facts that $\|x^k\|\le M$ and  $\gamma_{\max}\ge \gamma_k \ge \widetilde\gamma_{\min}$ for all $k$ (thanks to Remark~\ref{out_well}). Since $M_2$ is a constant independent of $i$ and $k$, we conclude further that
  \begin{equation}\label{boundL}
  \max_{1\le i\le n}\sup_k L^i_{v^k}\le M_2.
  \end{equation}

Equipped with \eqref{boundL}, we are now ready to argue the boundedness of $\{\widebar{\eta}_k\}$.
Notice that for each $k$, either $G_{\widetilde\gamma}(\widetilde v)\le G_{\widetilde\gamma}(v^k)$ holds for the first $\widetilde\eta$ used in Step 1b) so that $\widebar{\eta}_k \le \overline\eta$, or Step 1b) is invoked multiple times so that the $\widetilde v$ corresponding to $\tau\widebar{\eta}_k$ still gives $G_{\widetilde\gamma}(\widetilde v)> G_{\widetilde\gamma}(v^k)$. In the latter case, applying {\color{blue}Lemma~\ref{Lemmadescent} (ii)} with $\widehat x = x^k$, $\gamma = \gamma_k$, $\eta = \tau \widebar{\eta}_k$ and recalling that the $L^i_{v^k}$ constructed above satisfies the assumption in {\color{blue}Lemma~\ref{Lemmadescent} (ii)}, we see that this case is possible only if $\tau\widebar{\eta}_k\le \max_i L^i_{v^k}$. Combining the two cases with \eqref{boundL}, we conclude that
  \begin{equation*}
  \widebar{\eta}_k\le \max\{\overline{\eta}, \max_i L^i_{v^k}/\tau\}\le \max\{\overline{\eta},M_2/\tau\}.
  \end{equation*}
This completes the proof of item (ii).

  Finally, we prove item (iii).  Let $x^*$ be an accumulation point of $\{x^k\}$ and define $\alpha^k:= {\rm sgn}(x^k - \gamma_k \nabla f(x^k))$ for notational simplicity. Since $\gamma_{\max}\ge \gamma_k \ge \widetilde\gamma_{\min} > 0$ for all $k$ (see Remark~\ref{out_well}) and $\{\widebar\eta_k\}$ is bounded by item (ii),  by passing to further subsequences if necessary, we may assume without loss of generality that there exist subsequences $\{x^{k_j}\}$, $\{\widebar\eta_{k_j}\}$ and $\{\gamma_{k_j}\}$ such that
  \begin{equation}\label{lim}
    \lim_{j\to\infty}x^{k_j} = x^*,\ \ \lim_{j\to\infty}\widebar\eta_{k_j} = \eta_*,\ \ \lim_{j\to\infty}\alpha^{k_j} = \lim_{j\to \infty}{\rm sgn}(x^{k_j} - \gamma_{k_j} \nabla f(x^{k_j})) = \alpha^*,\ \ \lim_{j\to \infty}\gamma_{k_j} = \gamma_*
  \end{equation}
  for some $\alpha^*\in \{-1,1\}^n$, $\eta_* \ge \underline{\eta} > 0$ and $\gamma_* \ge \widetilde \gamma_{\min} > 0$.
 We then have from direct computation that
 \begin{align}
  &\lim_{j\to\infty} (g^i_{\gamma_{k_j}})'_+(v_i^{k_j}) =  \lim_{j\to\infty} (\phi(v_i^{k_j}) - |x_i^{k_j} - \gamma_{k_j} \nabla_i f(x^{k_j})|)\cdot \phi'_+(v_i^{k_j})/\gamma_{k_j}\nonumber\\
  &\overset{\rm (a)}=  \lim_{j\to\infty} (|x_i^{k_j}|- |x_i^{k_j} - \gamma_{k_j} \nabla_i f(x^{k_j})|)\cdot \phi'_+(v_i^{k_j})/\gamma_{k_j}\nonumber\\
   &=  \lim_{j\to\infty} (|x_i^{k_j+1}|- |x_i^{k_j} - \gamma_{k_j} \nabla_i f(x^{k_j})| + |x_i^{k_j}| - |x_i^{k_j+1}|)\cdot \phi'_+(v_i^{k_j})/\gamma_{k_j} \nonumber\\
   &\overset{\rm (b)}=  \lim_{j\to\infty} (\alpha^{k_j}_i(x_i^{k_j+1}-x_i^{k_j} + \gamma_{k_j} \nabla_i f(x^{k_j})) + |x_i^{k_j}| - |x_i^{k_j+1}|)\cdot \phi'_+(v_i^{k_j})/\gamma_{k_j} \nonumber\\
  & = \alpha^*_i\nabla_i f(x^*)  \phi_+'(\psi(|x^*_i|)), \label{gbar_lim}
  \end{align}
 where (a) follows from $v^k = \psi(|x^k|)$ and $\phi = \psi^{-1}$, (b) is true in view of the definition of $\alpha^k$ and the update rule of $x^{k+1}$, and the last equality follows from item (i), \eqref{lim}, the continuity of $\phi_+'$ and the fact that $v^{k_j} = \psi(|x^{k_j}|)$.

 Now, recall that $x^{k+1}=\alpha^k\circ\phi(v^{k+1})$, and
 \begin{equation}\label{bar_min}
 v^{k+1} \in \Argmin_{v\in \psi(\Omega)}\left\{ \frac{\widebar{\eta}_k}{2}\|v - v^k\|^2 + \sum_{i=1}^n[\lambda +(g^i_{\gamma_k})'_+(v_i^k)]\cdot (v_i - v_i^k)\right\}.
 \end{equation}
 From \eqref{bar_min}, we obtain that for each $j\ge 0$,
  \begin{eqnarray}\label{relationship}
  &&\frac{\widebar{\eta}_{k_j}}2\| v^{k_j+1} - v^{k_j}\|^2 + \sum_{i=1}^n[\lambda + (g^i_{\gamma_{k_j}})'_+(v_i^{k_j})]\cdot(v_i^{k_j+1} - v_i^{k_j})\nonumber\\
  && \le
    \frac{\widebar{\eta}_{k_j}}2\|v - v^{k_j}\|^2 + \sum_{i=1}^n[\lambda + (g^i_{\gamma_{k_j}})'_+(v_i^{k_j})]\cdot(v_i - v_i^{k_j})
  \end{eqnarray}
  whenever $v\in \psi(\Omega)$. Also, notice from \eqref{lim} and item (i) that
  \[
    \lim_{j\to\infty}v^{k_j} = \lim_{j\to\infty}\psi(|x^{k_j}|) = \psi(|x^*|)\ \ {\rm and}\ \
    \lim_{j\to\infty}v^{k_j+1}  =
    \lim_{j\to\infty}\psi(|x^{k_j+1}|) = \psi(|x^*|).
  \]
  Using the above display, \eqref{lim} and \eqref{gbar_lim}, we conclude upon passing to the limit as $j$ goes to infinity in \eqref{relationship} that
  \begin{eqnarray*}
  &&\frac{\eta_*}2\|\psi(|x^*|) - \psi(|x^*|)\|^2 + \sum_{i=1}^n\big[\lambda  + \alpha^*_i\cdot\nabla_i f(x^*)\cdot \phi_+'(\psi(|x^*_i|))\big]\cdot(\psi(|x^*_i|) - \psi(|x^*_i|))\nonumber\\
  && \le
    \frac{\eta_*}2\|\psi(|x|) - \psi(|x^*|)\|^2 + \sum_{i=1}^n\big[\lambda  + \alpha^*_i\cdot\nabla_i f(x^*)\cdot  \phi_+'(\psi(|x^*_i|))\big]\cdot(\psi(|x_i|) - \psi(|x^*_i|))
  \end{eqnarray*}
  whenever $x$ satisfies $|x|\in \Omega$.
  Since
  \[
  |x^*| = \lim_{j\to\infty}|x^{k_j}|\in\Omega,
  \]
  we obtain that
  \begin{equation}\label{x*relation}
      x^* \in \Argmin_{|x|\in \Omega}\left\{\frac{\eta_*}2\|\psi(|x|) - \psi(|x^*|)\|^2 + \sum_{i=1}^n\big[\lambda  + \alpha^*_i\cdot\nabla_i f(x^*) \cdot\phi_+'(\psi(|x^*_i|))\big]\cdot \psi(|x_i|)\right\}.
  \end{equation}
  Finally, since $x^{k+1} = \alpha^k\circ \phi(v^{k+1})$, we have $\alpha_i^k = {\rm sgn}(x_i^{k+1})$ whenever $\psi(|x^{k+1}_i|) = v^{k+1}_i \neq 0$. Then using item (i) and \eqref{lim}, we must also have
  \[
  \alpha^*_i = {\rm sgn}(x^*_i)\ \ {\rm if}\ x^*_i \neq 0.
  \]
  Moreover, we see from \eqref{lim} and the lower boundedness of $\{\gamma_k\}$ in Remark~\ref{out_well} that for all $i$ with $x_i^* = 0$ but $\nabla_i f(x^*)\neq 0$, we have $\alpha_i^* = -\sgn(\nabla_if(x^*))$.
  These conditions on $\alpha^*$ together with \eqref{x*relation} show that $x^*$ is a $\psi_{\rm opt}$-stationary point as desired.
\end{proof}

\section{Numerical experiments}\label{sec5}

In this section, we will conduct numerical experiments for Algorithm~\ref{Alg1} on order-constrained compressed sensing problems and block order-constrained sparse time-lagged regression problems.
All experiments are performed in Matlab R2017b on a 64-bit PC with 2.9 GHz Intel Core i9 6-Core and 32GB of DDR4 RAM.

\subsection{Compressed sensing problems with order constraints}
\label{sec5-1}

We first consider the following order-constrained compressed sensing problems with nonconvex regularizers for recovering sparse signals with an order structure:
\begin{equation}\label{order_cons}
    \begin{split}
        \min_{x\in\R^n} & \ \ \frac{1}{2}\|Ax - b\|^2 + \lambda\sum_{i=1}^n\psi(|x_i|)\\
        {\rm s.t.} & \ \ |x_1|\ge |x_2| \ge \cdots \ge |x_n|,
    \end{split}
\end{equation}
where $A\in\R^{m\times n}$, $b\in\R^m$, $\lambda > 0$ and $\psi(t) = t^p$ with $p\in(0,\, 0.5]$ or $\psi(t) = \log(1 + t/\epsilon)$ with $\epsilon > 0$.

We will solve \eqref{order_cons}  with $\psi(t) = t^p$ ($p\in(0,\,0.5]$) by \textbf{DMA} (Algorithm~\ref{Alg1}), and call this algorithm \textbf{DMA}$_{{\rm lp}}$. To the best of our knowledge, our \textbf{DMA} is the only available algorithm for such a model, due to the presence of both the $\ell_p$ regularizer and the order constraints. As a comparison, we consider three other simpler models:
\begin{itemize}
  \item $\min_{x\in \R^n} \frac12\|Ax - b\|^2 + \lambda \sum_{i=1}^n|x_i|^p$ (\emph{i.e.}, change the order-constrained model \eqref{order_cons} to an unconstrained model);
  \item $\min_{|x_1|\ge |x_2| \ge \cdots \ge |x_n|} \frac12\|Ax - b\|^2 + \lambda \sum_{i=1}^n|x_i|$ (\emph{i.e.}, set $\psi(t) = t$ in \eqref{order_cons});
  \item $\min_{x\in \R^n} \frac12\|Ax - b\|^2 + \lambda \sum_{i=1}^n|x_i|$ (\emph{i.e.}, LASSO).
\end{itemize}
Note that all these three models\footnote{Especially, the second model can be solved by \textbf{NPG} as discussed in Remark~\ref{remark:I}.} can be solved by the \textbf{NPG} proposed in \cite{WNF09} (see also \cite{CLP16,LP17,gist13}). We call the corresponding algorithms \textbf{NPG}$_{{\rm lp}}$, \textbf{NPG}$_{{\rm L1c}}$ and \textbf{NPG}$_{{\rm L1}}$, respectively, and we refer to the above four models as ``$\ell_p$-regularized models".

We  also solve \eqref{order_cons} with  $\psi(t) = \log(1 + t/\epsilon)$  by our \textbf{DMA}, and call this algorithm \textbf{DMA}$_{{\rm log}}$. Similarly, as a comparison, we solve a simpler model $\min_{x\in \R^n} \frac12\|Ax - b\|^2 + \lambda \sum_{i=1}^n\log(1 + |x_i|/\epsilon)$ by \textbf{NPG} and call this algorithm \textbf{NPG}$_{\rm log}$. In the following, we refer to these two models as ``logarithmically regularized models".

\paragraph{Data generation.}  First, we randomly generate an $n$-dimensional  vector with $s$ nonzero entries, which follow i.i.d. standard Gaussian distribution. We let the original signal $x_{\rm true}\in\R^n$ be a reordering of this vector such that its entries are nonincreasing in magnitude. Then, we generate $A\in\R^{m\times n}$ by normalizing each column of a randomly generated matrix that has i.i.d. standard Gaussian elements. Next, we set the measurement vector $b = Ax_{\rm true} + \sigma\varepsilon$, where the noise factor $\sigma > 0$ and the noise vector $\varepsilon\in\R^m$ has i.i.d. standard Gaussian entries.

\paragraph{Algorithm settings.} For \textbf{DMA}, we generate an $n$-dimensional random vector with i.i.d. Gaussian entries and set the initial point $x^0$ as the corresponding reordered vector whose entries are nonincreasing in magnitude. We let $c_1 = 10^{-4}$, $\tau = 0.5$ and $M = 4$. In Step 1, we initialize $\widetilde{\eta} = 1$, and initialize $\widetilde{\gamma} = 1$ for $k = 0$ and
\begin{equation*}
\widetilde{\gamma}: = \min\bigg\{\max\bigg\{\frac{\|x^k - x^{k-1}\|^2}{\|A(x^k - x^{k-1})\|^2},\,10^{-8}\bigg\},\,10^8\bigg\}
\end{equation*}
for $k\ge 1$. In Step 1b), we solve the subproblem by a solver developed from \cite{K64}.\footnote{The matlab code can be found in  \url{https://www.mathworks.com/matlabcentral/mlc-downloads/downloads/submissions/47196/versions/1/previews/improve_JP/toolbox_imp_JP/lsqisotonic.m/index.html}.}

For \textbf{NPG}, we use the same settings as those described in \cite[Section~5]{LP17} and set $P(z) = \lambda\sum_{i=1}^n|z_i|^p$ for \textbf{NPG$_{\rm lp}$}, $P(z) = \lambda\|z\|_1 + \delta_S(z)$ where $S:=\left\{x\in\R^n: |x_1|\ge |x_2| \ge \cdots |x_n|\right\}$ for \textbf{NPG}$_{{\rm L1c}}$, $P(z) = \lambda\|z\|_1$ for \textbf{NPG}$_{{\rm L1}}$, and $P(z) = \lambda\sum_{i=1}^n\log(1 + |z_i|/\epsilon)$ for \textbf{NPG}$_{\rm log}$.

We use the same initial point for all six algorithms and terminate them whenever the running time exceeds some fixed time \emph{maxtime} (seconds).

\paragraph{Test settings.} In our experiments, we set $p = 0.5$, $\epsilon =0.5$ and $\sigma = 0.1$, and consider three triples $(n, m, s) = (2560, 540, 180)$,  $(n, m, s) = (10240, 2160, 720)$ and  $(n, m, s) = (25600, 5400, 1800)$.
 For each triple, we generate 10 random instances as described above. For each instance for the triple $(n, m, s) = (2560, 540, 180)$, we solve the $\ell_p$-regularized models with $\lambda = 5\times 10^{-2}$ and the logarithmically regularized models with $\lambda = 8\times 10^{-2}$, and terminate all algorithms with \emph{maxtime} = 4. For each instance for the triple $(n, m, s) = (10240, 2160, 720)$, we solve the $\ell_p$-regularized models with $\lambda = 8\times 10^{-2}$ and the logarithmically regularized models with $\lambda = 10^{-1}$, and terminate all algorithms with \emph{maxtime} = 16. Finally, for each instance for the triple $(n, m, s) = (25600, 5400, 1800)$, we solve the $\ell_p$-regularized models with $\lambda =  10^{-1}$ and the logarithmically regularized models with $\lambda = 2\times 10^{-1}$, and terminate all algorithms with \emph{maxtime} = 40.

To evaluate the performance of all the algorithms, similar to \cite[Section~5.1]{YL17}, we take a normalized measurement of recovery error with respect to time.
Specifically, for each random instance and each algorithm, we let $e_r(k):= \|x^k - x_{\rm true}\| $ be the recovery error at $x^k$ and define
\begin{equation*}
E(t): = \min\big\{e(k): k\in\{i: T(i) \le t\}\big\}\ \ {\rm with}\ \ e(k):= \frac{e_r(k) - e_r^{\min}}{e_r(0) - e_r^{\min}},
\end{equation*}
where $T(k)$ denotes the total computational time until $x^k$ is obtained, and $e_r^{\min}$ is the minimum recovery error among \emph{all} algorithms at termination for this random instance.

In Figure~\ref{result_randm}, for each triple, we compare the average of $E(t)$ over 10 random instances for all six algorithms. In addition, for the triple $(n, m, s) = (25600, 5400, 1800)$, we plot the first $1980$ entries of the recovered signals obtained from each algorithm for one random instance. As one can see,  \textbf{DMA}$_{\rm lp}$ generally outperforms \textbf{NPG}$_{{\rm lp}}$ and \textbf{NPG}$_{{\rm L1c}}$ in terms of recovery error, which suggests the necessity of using the order constraints and the $\ell_p$ regularizer (instead of the $\ell_1$ regularizer), respectively. Also, the outperformance of  \textbf{DMA}$_{\rm log}$ over \textbf{NPG}$_{\rm log}$ highlights the advantage of incorporating the order constraints into the model as well.
Moreover, compared with \textbf{NPG}$_{{\rm L1}}$, the superiority of \textbf{DMA}$_{\rm lp}$ and \textbf{DMA}$_{\rm log}$ implies that solving order-constrained models with nonconvex regularizers can help improve the recovery error in the case when fewer number of observations are available.

\begin{figure}[h!]
\begin{subfigure}{.5\textwidth}
\centering
\includegraphics[width=.85\linewidth]{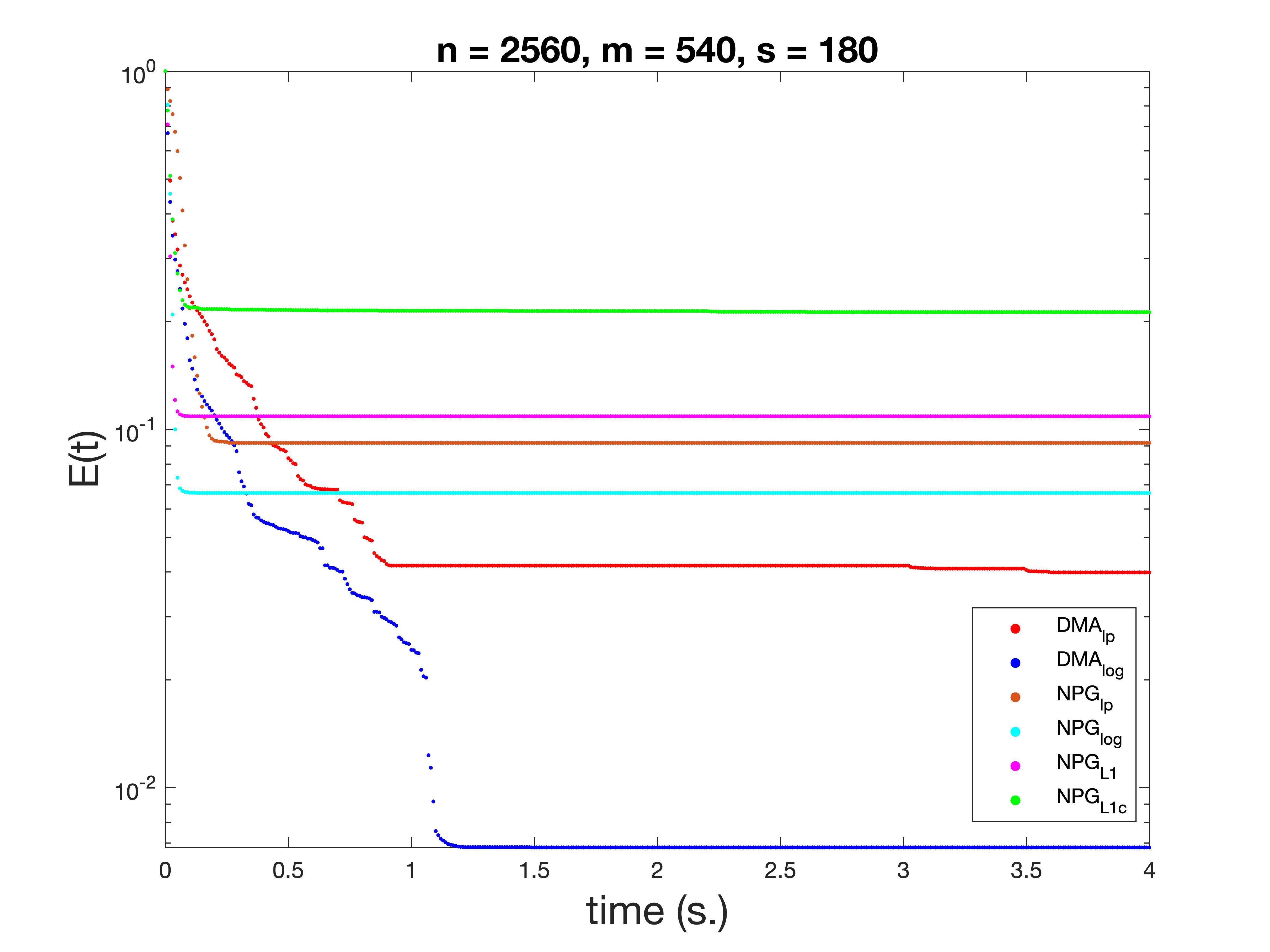}
\end{subfigure}
\hfill
\begin{subfigure}{.5\textwidth}
\centering
\includegraphics[width=.85\linewidth]{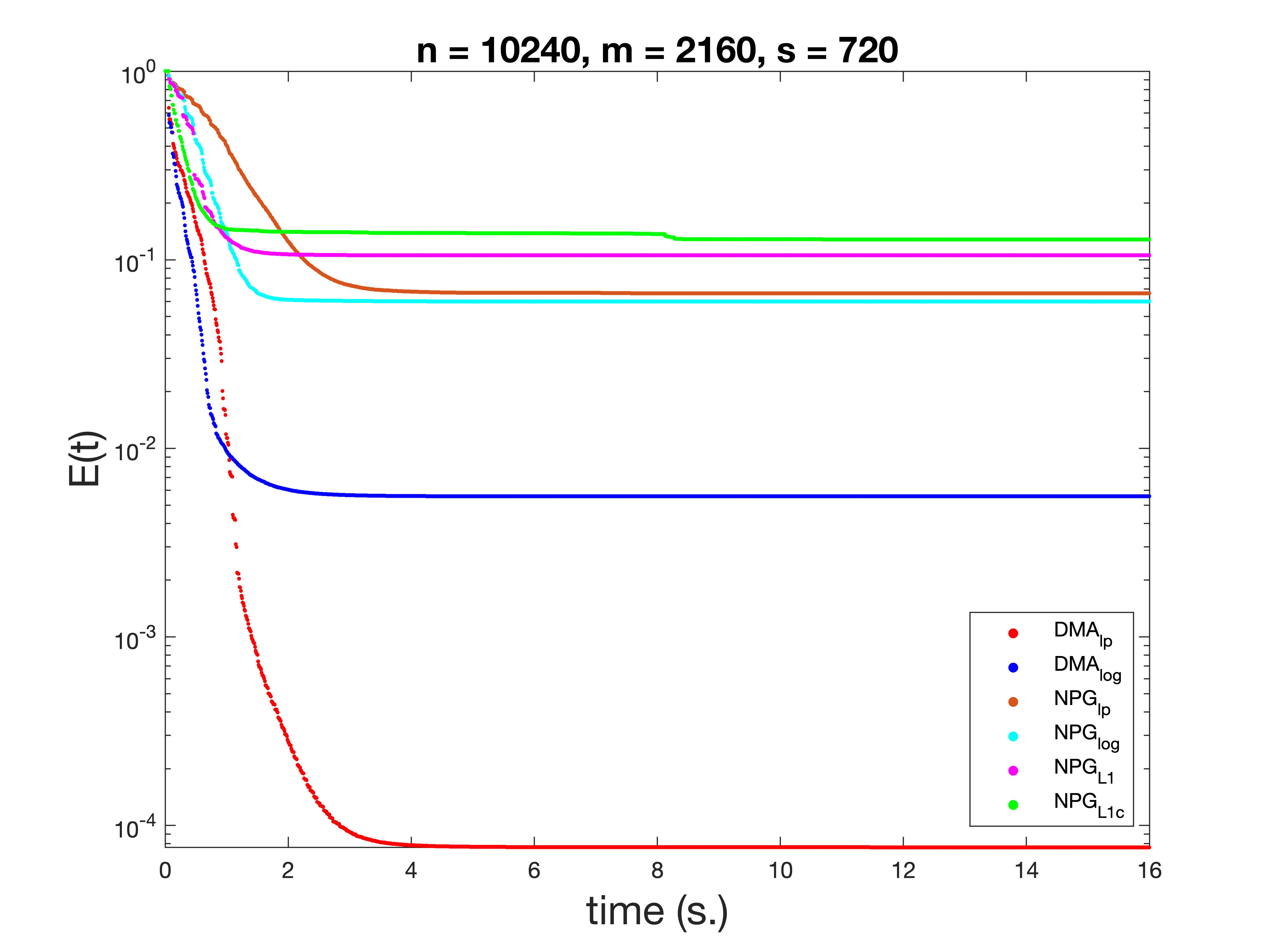}
\end{subfigure}
\hfill
\begin{subfigure}{.5\textwidth}
\centering
\includegraphics[width=.85\linewidth]{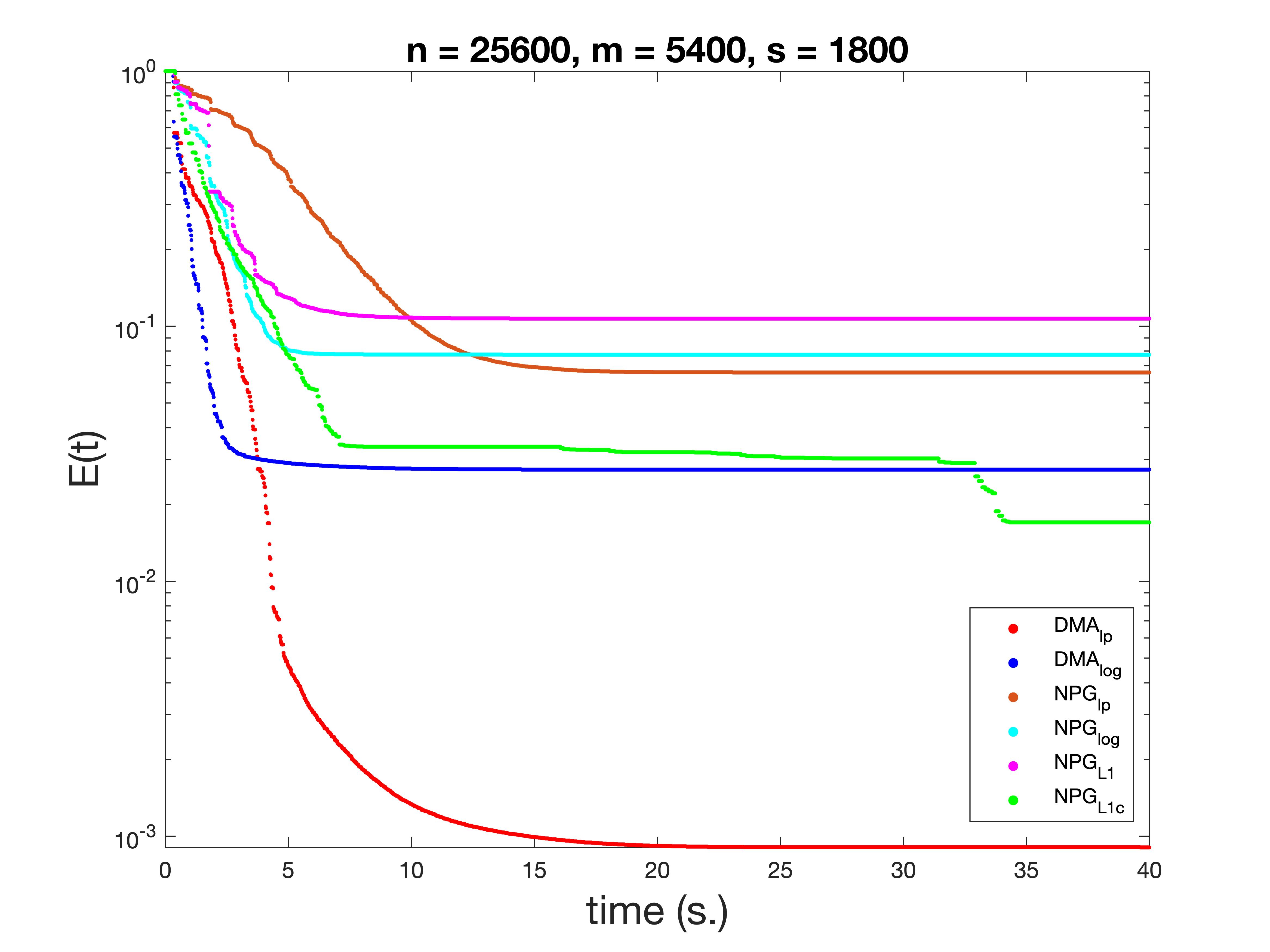}
\end{subfigure}
\hfill
\begin{subfigure}{.5\textwidth}
\centering
\includegraphics[width=.85\linewidth]{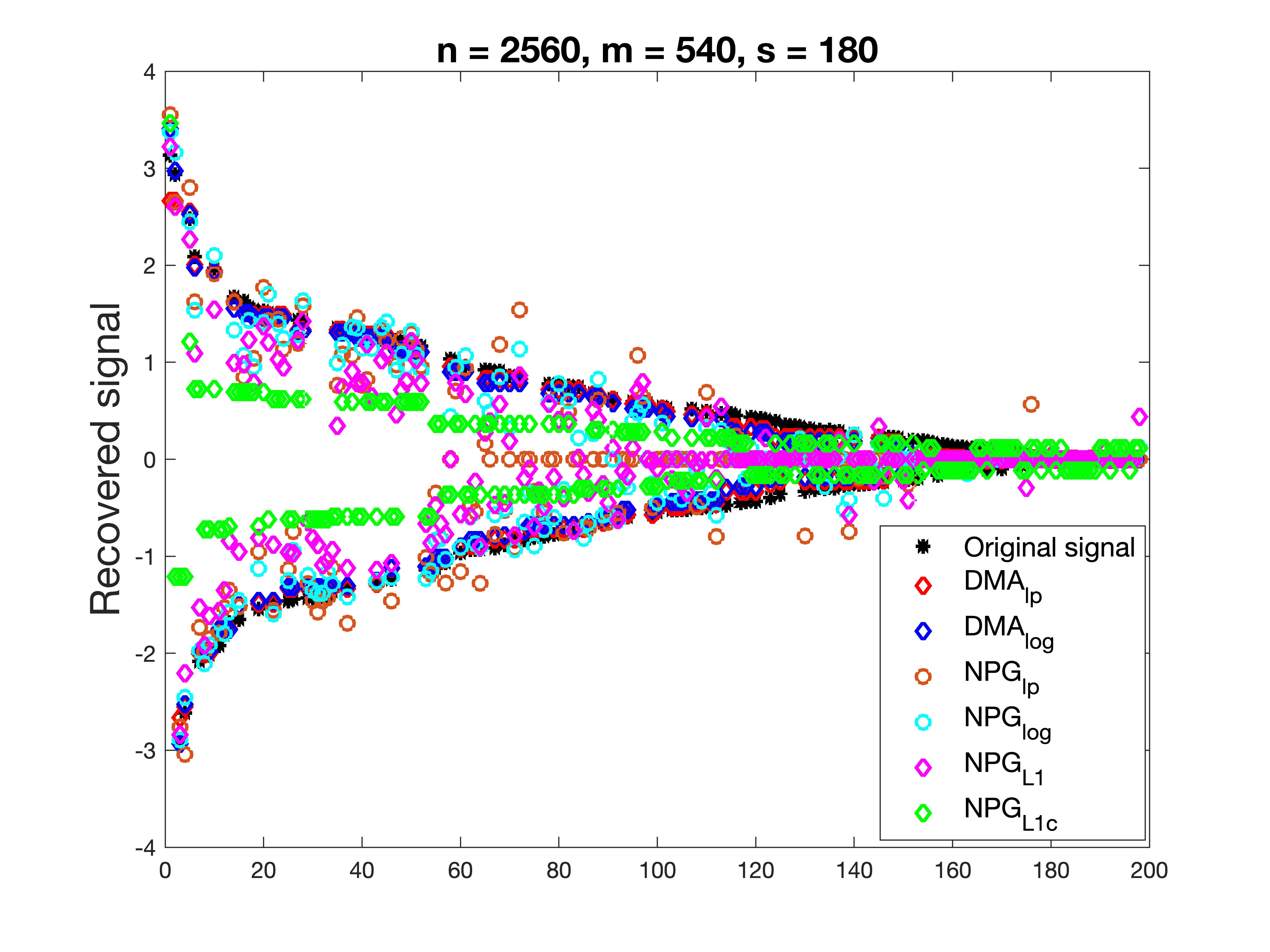}
\end{subfigure}
\caption{Comparison of the averaged recovery errors and the recovered signals (the horizontal axis shows the index $i$ and the vertical axis shows the $i$th entry of the recovered signal). }
\label{result_randm}
\end{figure}

\subsection{Sparse time-lagged regression problems with block order constraints}

We also test our Algorithm~\ref{Alg1} on real data. Specifically, we solve the following block order-constrained model arising from \cite[Section~3]{TibSuo16}\footnote{In \cite{TibSuo16}, the authors only considered the model with $q = 1$ and subsequently solved a convex approximation of it.} for time-lagged regression problems.
\begin{equation}\label{order_blk}
    \begin{split}
        \min_{x\in\R^{pK}} & \ \ \frac{1}{2N}\|Ax - b\|^2 + \lambda\sum_{j=1}^{pK}|x_j|^q\\
        {\rm s.t.} & \ \ |x_{(i -1) K + 1}|\ge |x_{(i -1)K +2}| \ge \cdots \ge |x_{iK}|, \ \ \ i = 1,\ldots,p,
    \end{split}
\end{equation}
where $A\in\R^{N\times pK}$, $b\in\R^N$, $\lambda > 0$ and $q\in(0,\,0.5]\cup\{1\}$. Here, $N$ is the number of observations, $p$ is the number of predictors, and $K$ is the maximum time lag. For $j=1,\ldots,N$, data $b_j$ represents the $j$th observation and data $A_{j,(i-1)K + k}$ represents the value of predictor $i$ of observation $j$ at time-lag $k$ from the current time.

The data we used for test record 330 days of  the level of atmospheric ozone concentration (response variable) and 8 daily meteorological measurements (predictors) made in the Los Angeles basin in 1976; see \url{https://hastie.su.domains/ElemStatLearn/datasets/LAozone.data}.
This data set was used in \cite[Section~3.5]{TibSuo16} and we set a maximum time-lag of 20 days as in \cite[Section~3.5]{TibSuo16}, and predict from the measurements on the current day and the previous 19 days. Then we set both the training and validation sets to have the same size and use cross validation to search for a viable $\lambda$ for final comparison. Specifically, in model \eqref{order_blk}, we let $K = 20$, $p = 8$ and $N = 155$, and each $b_i$ and $A_{i, :}$ ($i = 1,\ldots, N$) are constructed as described in Figure~\ref{data_co}.
\begin{figure}[h!]
\centering
\includegraphics[width=1.0\linewidth]{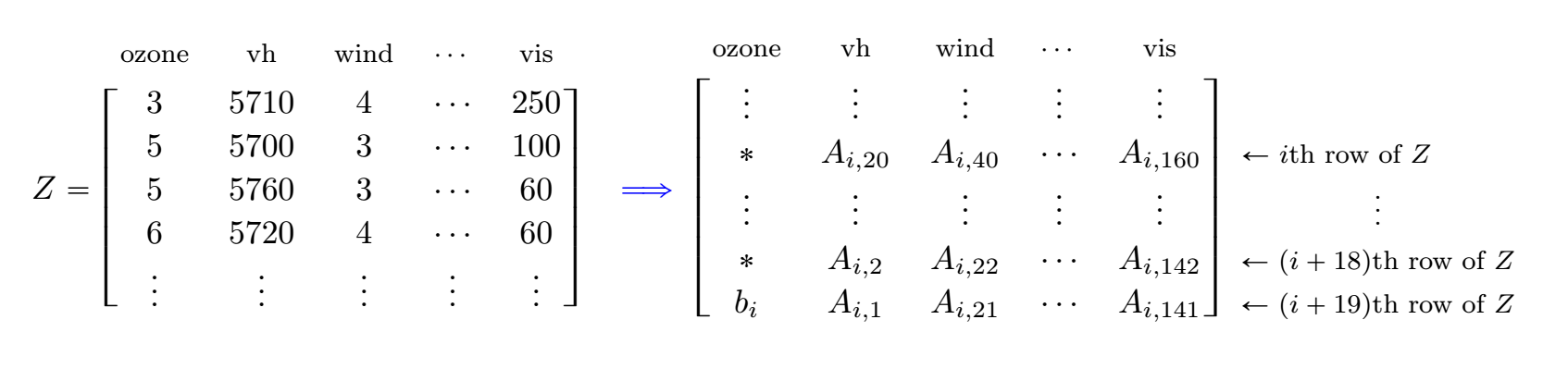}
\vspace{-1.1cm}
\caption{\small{Here, $Z$ represents the data matrix in the aforementioned link of real data, and the matrix on the right hand side presents the position (in $Z$) of $b_i$ and $A_{i, j}$ ($j = 1,\ldots, 160$), each of which takes the value of the element of $Z$ with the same position. For example, $b_2$ is set to be the ozone data in the 21st row of $Z$, and $A_{2, 1}, A_{2, 2},\ldots A_{2, 20}$ are set to be the vh data in the 21st, 20th, $\ldots$, and the 2nd row of $Z$, respectively.}
}
\label{data_co}
\end{figure}

As we can see from Figure~\ref{data_co}, the training data matrix $A\in\R^{155\times 160}$ corresponds to the data of 8 predictors (vh, $\ldots$ vis)
in $Z$  from row 1 to row 174, and $b\in\R^{155}$ correspond to the data of ozone in $Z$ from row 20 to row 174. We construct the validation data matrix $\widetilde{A}\in\R^{155\times 160}$ and $\widetilde{b}\in\R^{155}$ in a similar way as in Figure~\ref{data_co}, where on the right hand side of Figure~\ref{data_co}, the $A$ and $b$ are replaced by $\widetilde{A}$ and $\widetilde{b}$ respectively, and the row counter of $Z$ starts from $i+N$ instead of $i$. In essence, elements of $\widetilde{A}$ correspond to the data of 8 predictors in $Z$ from row 156 to row 329, and elements of $\widetilde{b}$ correspond to the data of ozone in $Z$ from row 175 to row 329.

Given that the data for the 8 predictors are measured on different scales, standardizations of each column of $A$ and $b$ are conducted before solving \eqref{order_blk}:
\begin{equation*}
{\bm a}_i \leftarrow \frac{{\bm a}_i - {\rm mean}({\bm a}_i)e}{{\rm std}({\bm a}_i)}, \ \ \ b\leftarrow \frac{b - {\rm mean}(b)e}{{\rm std}(b)},
\end{equation*}
where ${\bm a}_i$ is the $i$th column of $A$, and ${\rm mean}(\cdot)$ and ${\rm std}(\cdot)$ stand for the sample mean and the sample standard deviation, respectively. Once we solve \eqref{order_blk} with the standardized $A$ and $b$ as described above to obtain an approximate solution, say $x^*$, we will  predict $\widetilde{b}$ by
\begin{equation*}
\widetilde{b}_{\rm pred} = {\rm std}(b)\cdot(\widetilde{A}'x^*)  + {\rm mean}(b)e,
\end{equation*}
where $\widetilde A'$ is obtained from $\widetilde A$ by standardizing each column of $\widetilde A$.

Next, we will solve \eqref{order_blk} with {\color{blue}$q = 0.3$ and} $q = 0.5$ by Algorithm~\ref{Alg1} (\textbf{DMA}). In \cite{TibSuo16}, problem \eqref{order_blk} with $q = 1$ was approximated by a convex problem by replacing each block of constraints by the constraints as in \eqref{Prob11}.  As mentioned in the introduction, the solution obtained from this approximation model may lack proper interpretation.
Meanwhile, note that \eqref{order_blk} with $q = 1$ can be solved by \textbf{NPG} in view of Remark~\ref{remark:I}. In our experiments below, we will compare \textbf{DMA} with \textbf{NPG} (which solves \eqref{order_blk} with ${\color{blue}q} = 1$) in terms of validation error, which is defined by $\|\widetilde{b}_{\rm pred} - \widetilde{b}\|$.

\paragraph{Algorithm settings.}  For \textbf{DMA} and \textbf{NPG}, we generate the same random initial point $x^0\in\R^{pK}$ with each $K$-dimensional block having nonincreasing entries in the same way as described in Section~\ref{sec5-1}, and terminate both algorithms  whenever
\begin{equation*}
\frac{\|x^k - x^{k-1}\|}{\max\left\{1,\, \|x^k\|\right\}} < 10^{-6}.
\end{equation*}
The other parameters for  \textbf{DMA} and \textbf{NPG} are the same as in Section~\ref{sec5-1}. In Step 1b), the subproblems of these algorithms reduce to $p$ separate projection problems onto the set $\widehat{\Omega}:= \{y\in\R_+^K: y_1\ge\cdots\ge y_K \}$, which again will be solved by the solver developed from \cite{K64}.

In our test, for a sequence of $\lambda$ generated from the Matlab command
``logspace(-4, 1, 100)", we solve the corresponding \eqref{order_blk} by  \textbf{DMA} {\color{blue}(with $q = 0.3$ and $q = 0.5$)} and \textbf{NPG}, and  then compute their identification errors  (defined by $\|Ax^* - b\|$,  {\color{blue} denoted by DMA$_{\rm id}^{0.3}$, DMA$_{\rm id}^{0.5}$ and NPG$_{\rm id}$,  respectively}) and validation errors (defined by $\|\widetilde{b}_{\rm pred} - \widetilde{b}\|$, {\color{blue} denoted by DMA$_{\rm v}^{0.3}$, DMA$_{\rm v}^{0.5}$ and NPG$_{\rm v}$, respectively}). In Figure~\ref{real_err}, we first plot the  identification errors and validation errors with different $\lambda$ for \textbf{DMA} {\color{blue}(with $q = 0.3$ and $q = 0.5$)} and \textbf{NPG}. Next, for each algorithm, we select a proper $\lambda$ in the sense of simultaneously leading to small identification error and small validation error.
The one we select for \textbf{DMA} is {\color{blue}$\lambda = 3.68\times 10^{-3}$ when $q = 0.3$,} $\lambda = 4.13\times 10^{-3}$ {\color{blue}when $q = 0.5$,} and for \textbf{NPG}  is $\lambda = 1.67\times 10^{-2}$, which correspond to the $\lambda$ in Figure~\ref{real_err} {\color{blue}(the first three pictures)} that leads to the smallest validation error {\color{blue}DMA$_{\rm v}^{0.3}$ (55.55), DMA$_{\rm v}^{0.5}$ (56.17) and NPG$_{\rm v}$ (56.98), respectively. In view of this, \textbf{DMA} has a slightly better prediction that \textbf{NPG}.}
In the last picture of Figure~\ref{real_err}, we plot the predicted ozone concentration $\widetilde{b}_{\rm pred}$ for \textbf{DMA} {\color{blue}(with $q = 0.3$ and $q = 0.5$)} and \textbf{NPG} ({\color{blue} denoted by predicted$_{{\rm DMA}_{0.3}}$, predicted$_{{\rm DMA}_{0.5}}$ and predicted$_{\rm NPG}$ respectively}, each solves \eqref{order_blk} with the $\lambda$ selected above) and  true ozone concentration $\widetilde{b}$ {\color{blue} (denoted by true)}.
One can  see from the picture that the prediction from {\bf DMA} has fewer negative entries in the predicted ozone concentration: {\color{blue}4 negative entries from {\bf DMA} with $q = 0.5$ and 7 negative entries from {\bf NPG}.}

\begin{figure}[h!]
\begin{subfigure}{.5\textwidth}
\centering
\includegraphics[width=.95\linewidth]{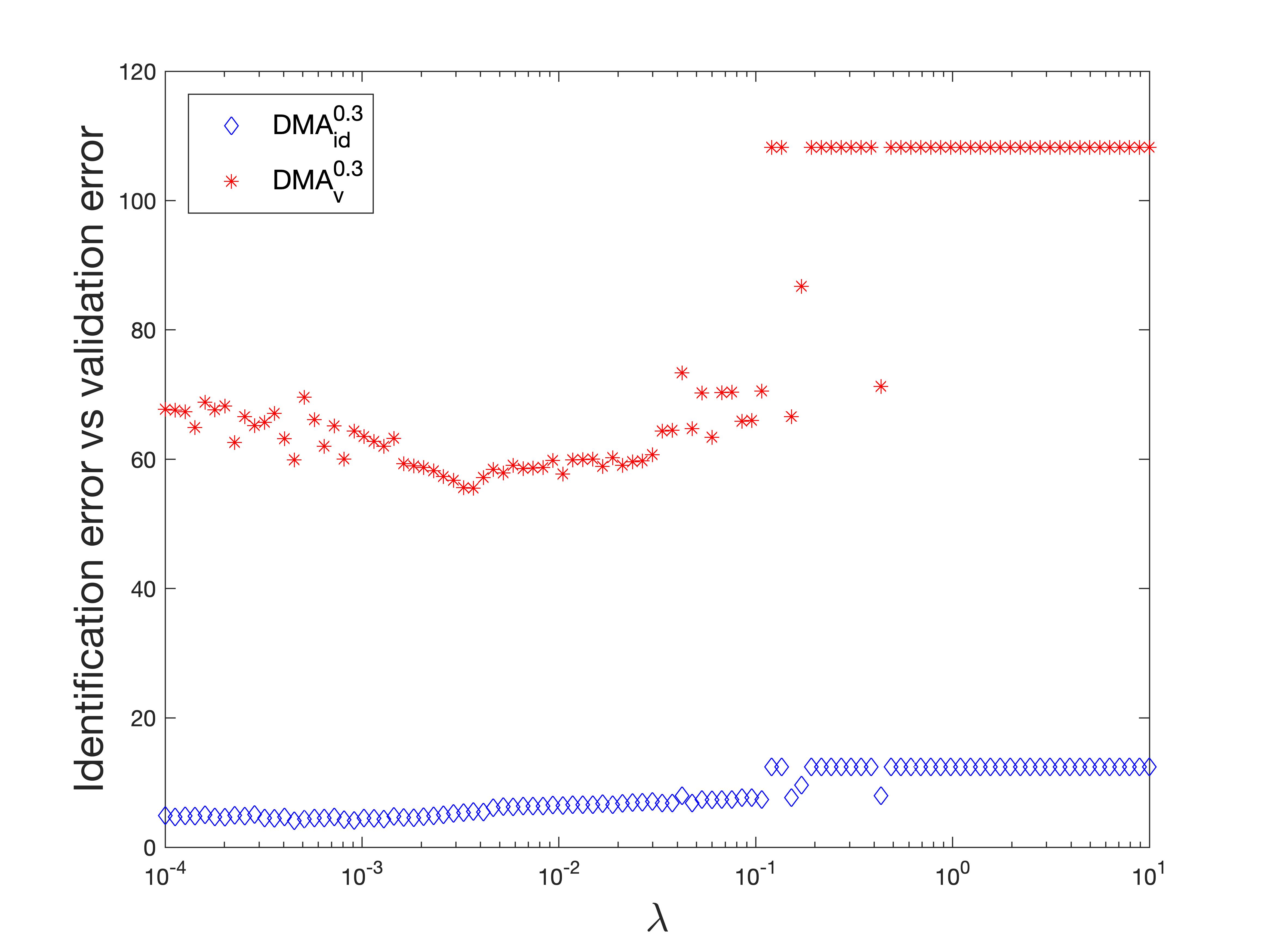}
\end{subfigure}
\begin{subfigure}{.5\textwidth}
\centering
\includegraphics[width=.95\linewidth]{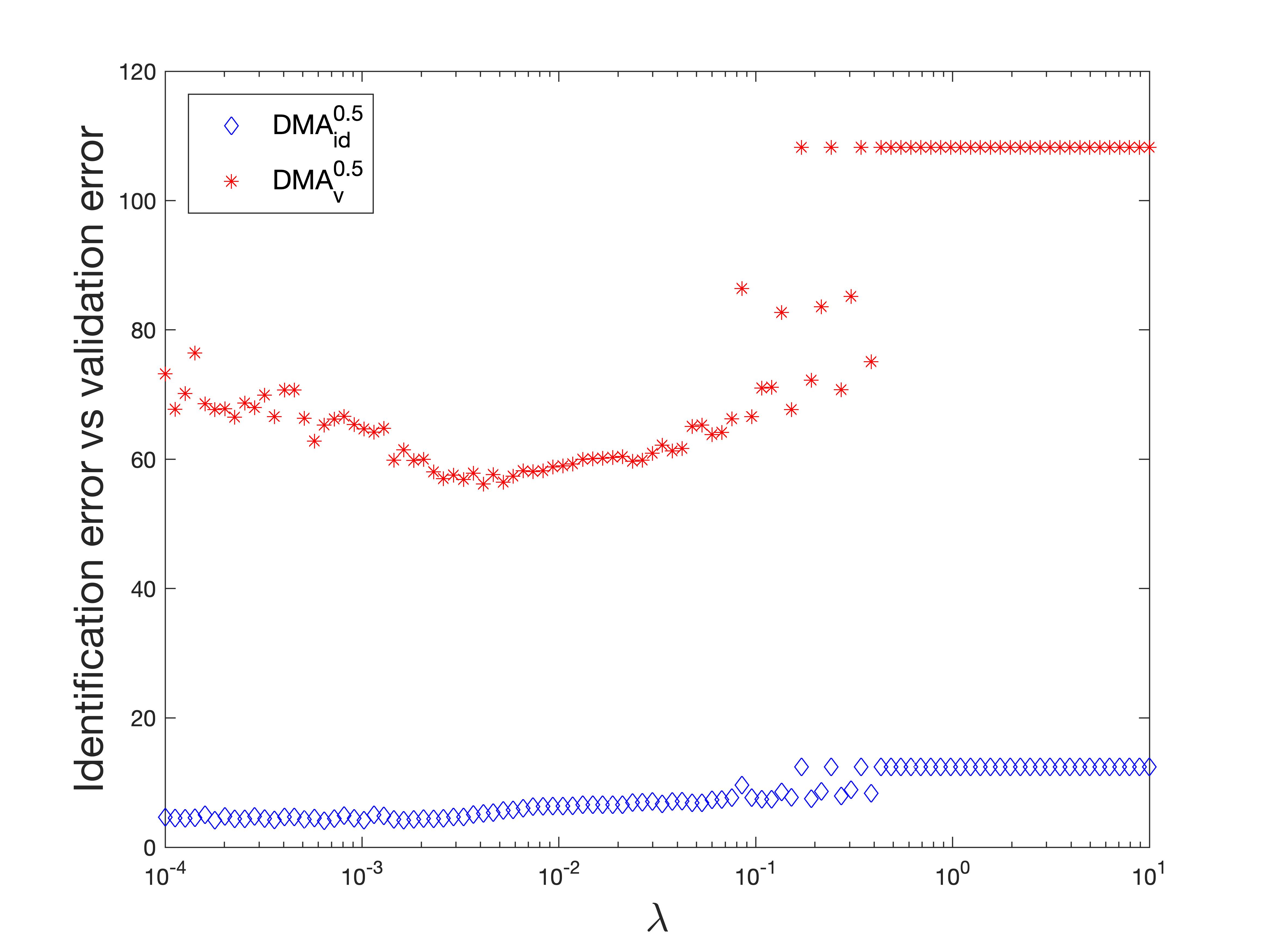}
\end{subfigure}
\begin{subfigure}{.5\textwidth}
\hspace{-9.9mm}
\includegraphics[width=.95\linewidth]{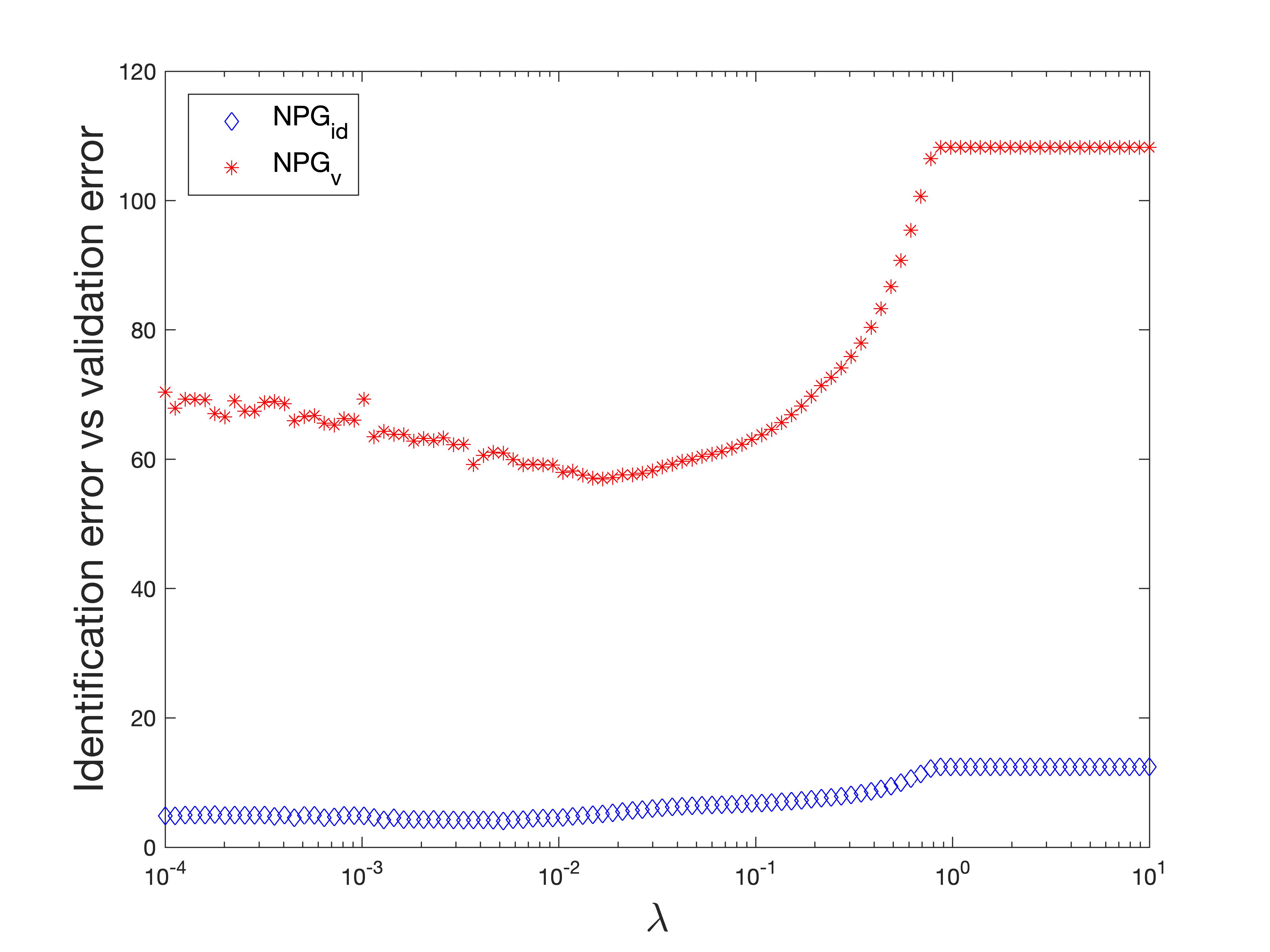}
\end{subfigure}
\hspace{-2cm}%
\begin{subfigure}{.5\textwidth}
\centering
\includegraphics[width=1.4\linewidth]{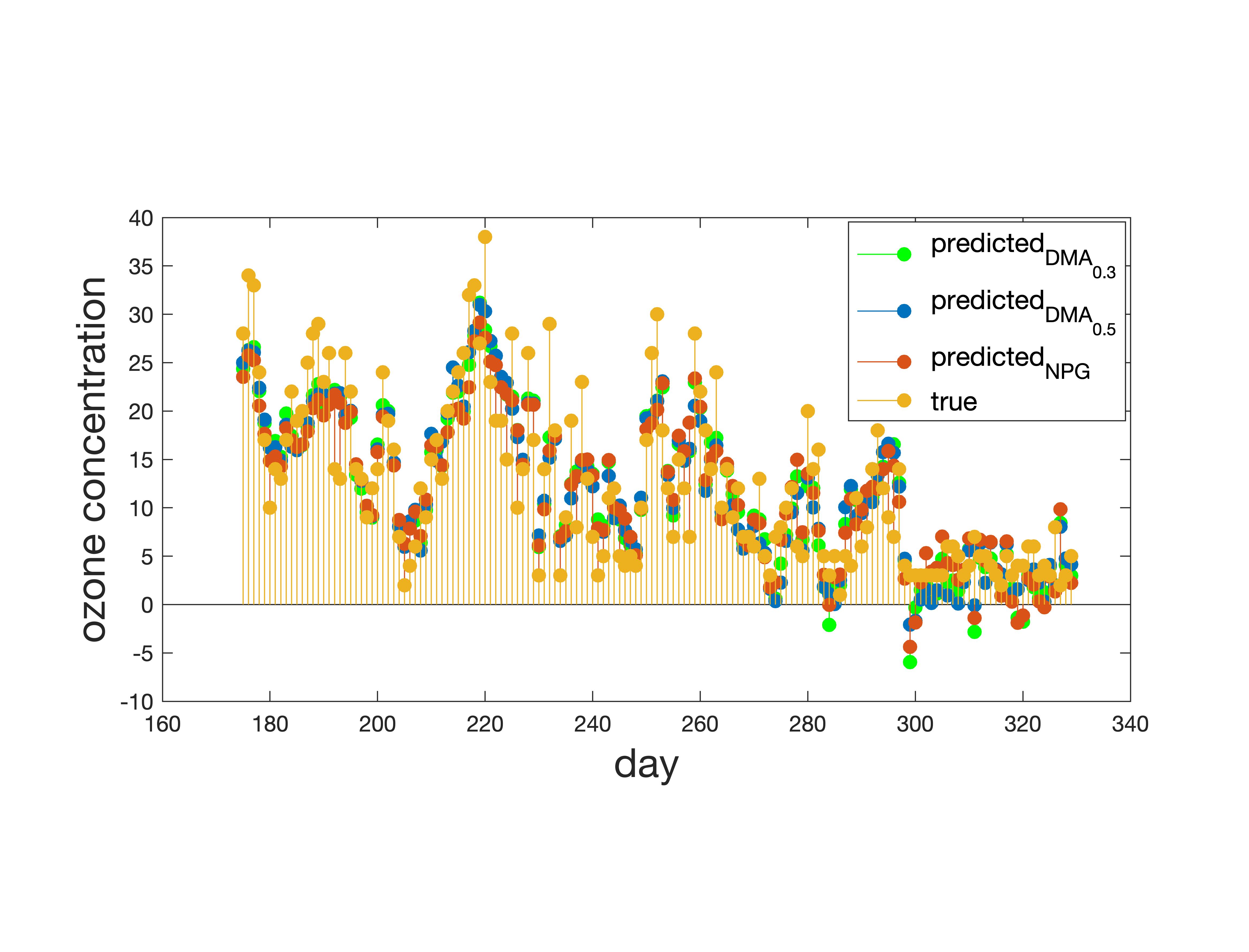}
\end{subfigure}
\vspace{-1.5cm}
\caption{{\color{blue}The first three pictures plot the identification error (blue point) and the validation error  (red point)  with different $\lambda$ for \textbf{DMA} (with $q = 0.3$ and $q = 0.5$) and \textbf{NPG}, respectively. The last picture presents the true ozone concentration and predicted  ozone concentration  for \textbf{DMA} with $\lambda = 3.68\times 10^{-3}$ when $q = 0.3$, $\lambda = 4.13\times 10^{-3}$ when $q = 0.5$ and \textbf{NPG} with $\lambda = 1.67\times 10^{-2}$.}}
\label{real_err}
\end{figure}

\end{document}